\documentclass{article}
\usepackage{subfig}
\usepackage[section] {placeins}
\usepackage{amsthm}
\usepackage{amssymb,amsmath, mathdots}
\usepackage{anysize}
\usepackage{psfrag}
\pagestyle{plain}
\newtheorem{thm}{Theorem}[section]
\newtheorem{def.}{Definition}[section]

\newtheorem{lem}{Lemma}[section]

\numberwithin{table}{section}
\setcounter{secnumdepth}{5}

\begin{document}

\title{Delta Diagrams}
\author{Slavik Jablan\\
        The Mathematical Institute\\
        Knez Mihailova 36\\
        P.O. Box 367, 11001, Belgrade\\
        Serbia\\
        \texttt{sjablan@gmail.com}\\
        and\\
        Louis H. Kauffman\\
        Department of Mathematics, Statistics and Computer Science\\
        University of Illinois at Chicago\\
        851 S. Morgan St., Chicago IL 60607-7045\\
        USA\\
        \texttt{kauffman@uic.edu}\\
        and\\
        Pedro Lopes\\
        Center for Mathematical Analysis, Geometry and Dynamical Systems\\
        Department of Mathematics\\
        Instituto Superior T\'ecnico, Universidade de Lisboa\\
        1049-001 Lisboa\\
        Portugal\\
        \texttt{pelopes@math.tecnico.ulisboa.pt}\\
}
\date{December 31, 2015}
\maketitle

\begin{abstract}
We call a {\it Delta Diagram} any diagram of a knot or link whose regions (including the unbounded one) have  3, 4, or 5 sides. We prove that any knot or link admits a delta diagram. We define and estimate combinatorial link invariants stemming from this definition.
\end{abstract}

\bigbreak

Keywords: knots, links, diagrams, Delta diagrams, lune-free diagrams.

\bigbreak

MSC 2010: 57M27

\bigbreak

\section{Introduction}

\noindent

This article is about the graphical structure of a link diagram. Specifically, we prove that any link can be represented by a diagram whose regions (including the unbounded one) possess $3$, $4$, or $5$ sides; we call these {\it Delta Diagrams}. More precisely, we prove that, starting from a braid closure representation of the link, we can deform it in order to obtain an equivalent delta diagram, see Theorem \ref{thm:deltadiagram}.

Delta diagrams are particular cases of lune-free diagrams -- diagrams which do not possess regions with $2$ sides (lunes). In this context, the present article is a sequel to \cite{JKL}, where we proved that any link diagram can be deformed into a lune-free diagram. We called this process ``delunification''. We showed this process can be realized via finite sequences of Reidemeister moves. Furthermore, we proved we can choose these finite sequences of Reidemeister moves so that a diagram equipped with a non-trivial Fox coloring, preserves its number of colors all the way through its delunification. Thus, the minimum number of colors is attained on lune-free diagrams and we can search for these minima over the smaller class of lune-free diagrams. As of $16$ crossings things become intractable. Perhaps with a subclass of lune-free diagrams things would become intractable at a (much) later stage? This prompted us into the investigations which led to the current article.

We warn the reader that the links studied in this article are non-split links. That is, they do not isotope into disconnected components. The reason is that disconnected components can be resolved into delta diagrams on their own.

The article is organized as follows. In Section \ref{sect:prelim} we introduce the relevant definitions and the auxiliary results. In Section \ref{sect:proof} we prove the main result, the fact that a knot or link can be represented by a diagram whose regions have $3$, $4$, or $5$ sides -- called below delta diagrams.  In Subsection \ref{subsect:conseqkinks} we present an upper bound on the number of crossings produced by the transformation into a delta diagram. In Section \ref{sect:discharging} we prove that a connected lune-free diagram has at least one triangle which is adjacent to another triangle or to a tetragon or to a pentagon (Theorem \ref{thm:dis}). We prove this result using the method of discharging. This result prompts us to define invariants which we list in Subsection \ref{subsect:featureinvs}. In Section \ref{sect:invs} we define further invariants associated to the delta diagrams and lune-free diagrams. Finally, in Section \ref{sect:questions} we list a number of questions for future research.

The article \cite{adams} came to our attention after writing the present one. In it the authors give a different proof that links have delta diagrams. We feel our proof is significantly different from theirs to justify publication of the current article.

\subsection{Acknowledgments}

P.L. acknowledges support from FCT (Funda\c c\~ao para a Ci\^encia e a Tecnologia), Portugal, through project FCT EXCL/MAT-GEO/0222/2012, ``Geometry and Mathematical Physics''. We acknowledge the  contributions of the referee -- in particular, Subsection \ref{subsect:conseqkinks} stems from the referee's remarks.

\section{Preliminaries}\label{sect:prelim}

\begin{def.}
We call {\rm delta diagram} any diagram of a knot or link whose regions have  3, 4, or 5 sides.
\end{def.}

In Figure \ref{fig:hopflink} we illustrate how to obtain a delta diagram for the Hopf link from the braid closure of $\sigma_1^2$.
\begin{figure}[!ht]
	\psfrag{3}{\huge$3$}
	\psfrag{4}{\huge$4$}
	\centerline{\scalebox{.4}{\includegraphics{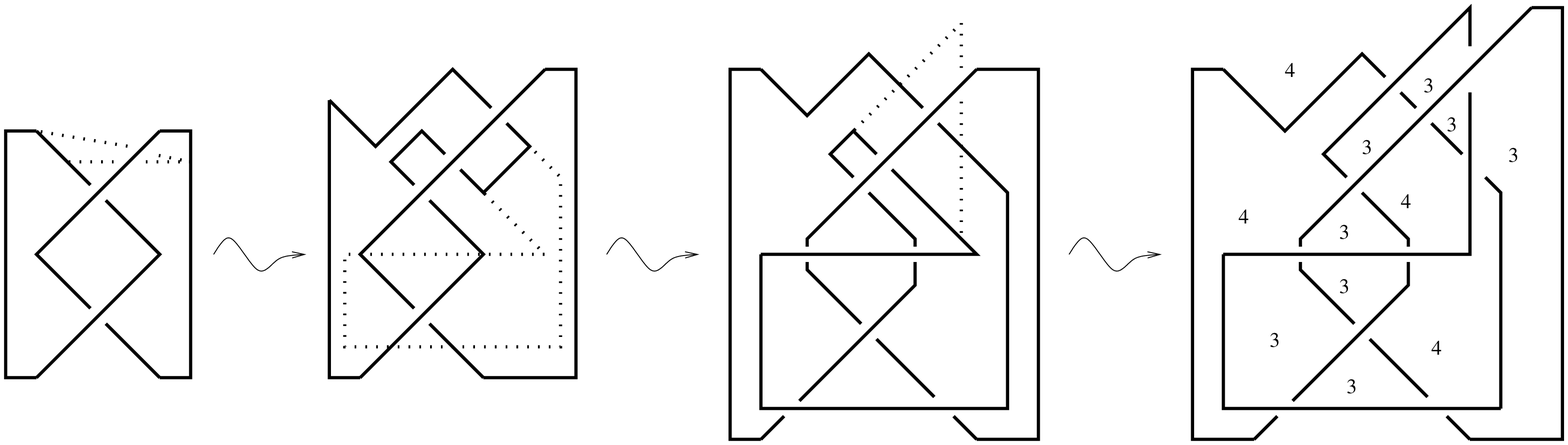}}}
	\caption{Deltization of the Hopf Link. The broken lines indicate the move that takes to the diagram on the right. The numbers on the rightmost diagram count the sides of the regions they are in.}\label{fig:hopflink}
\end{figure}
This treatment of the Hopf link is completely different from that we will be applying to the other knots and links. Our technique will rely on drawing arcs through regions in order to break them down into regions with less sides. At this point the following Lemma is crucial.

\begin{lem}\label{lem:main}
In order to break down a region into regions with less sides by drawing an arc through it, the arc has to skip two sides. Regions with $3$, $4$ or $5$ sides cannot be broken down into regions with less sides $($greater than $2)$.
\end{lem}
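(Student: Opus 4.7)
The plan is to set up clean bookkeeping for how an arc across a region changes the side count, then read off both conclusions from a single inequality.

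First I would fix notation. Let $R$ be a region with $n$ sides, cyclically labeled $s_1,\ldots,s_n$, and suppose the arc enters through the interior of some side $s_i$ and exits through the interior of some side $s_j$, subdividing $R$ into two new regions $R_1$ and $R_2$. The first easy case to dispose of is $i=j$: the arc and the piece of $s_i$ between its endpoints bound a bigon, which has 2 sides, so this fails to produce regions with ``less sides (greater than $2$).'' Hence I may assume $i\neq j$, and after relabeling $i<j$.

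The main step is a straightforward count. The region $R_1$ is bounded by a piece of $s_i$, the full sides $s_{i+1},\ldots,s_{j-1}$, a piece of $s_j$, and the arc itself, for a total of $(j-i-1)+3 = j-i+2$ sides. Symmetrically, $R_2$ has $n-(j-i)+2$ sides. (As a sanity check, these sum to $n+4$, consistent with turning $n-2$ unchanged full sides, $2$ sides cut in half, and one arc into two regions.) Setting $k:=j-i$, the requirement that both $R_1$ and $R_2$ have strictly fewer than $n$ sides becomes
\[
k+2 < n \quad \text{and} \quad n-k+2 < n,
\]
equivalently $3 \le k \le n-3$. In words, between the entry side $s_i$ and the exit side $s_j$ there must be at least two full sides of $R$ going one way around, and at least two going the other way — this is precisely the assertion that ``the arc has to skip two sides'' (on each side of the arc), establishing the first sentence of the lemma.

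For the second sentence, I just observe that the inequality $3 \le k \le n-3$ has no integer solution when $n \in \{3,4,5\}$, since it forces $n\ge 6$. Hence no arc across a region with $3$, $4$ or $5$ sides can break it down into two regions each having fewer sides and more than $2$ sides. There is no real obstacle in this argument; the only thing to be careful about is the cyclic indexing and the correct accounting of partial versus full sides, so I would be explicit about those in the write-up rather than appealing to a picture.
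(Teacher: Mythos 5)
Your proof is correct and follows the same underlying idea as the paper's: count the sides of the two subregions created by the arc and observe that both can only be smaller than the original when at least two full sides lie on each side of the arc, forcing $n\ge 6$. The paper carries this out by inspecting figures and enumerating the cases $n=3,4,5$ separately, whereas your explicit formula $j-i+2$ and the inequality $3\le k\le n-3$ (plus the separate disposal of the $i=j$ bigon case) give a cleaner, picture-free version of the same argument.
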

\begin{proof}
\begin{figure}[!ht]
	\psfrag{3}{\huge$3$}
	\psfrag{4}{\huge$4$}
	\psfrag{1}{\huge$1$}
	\psfrag{5}{\huge$5$}
	\psfrag{n}{\huge$n$}
	\psfrag{n+1}{\huge$n+1$}
	\psfrag{n-1}{\huge$n-1$}
	\psfrag{n-2}{\huge$n-2$}
	\psfrag{n-3}{\huge$n-3$}
	\centerline{\scalebox{.27}{\includegraphics{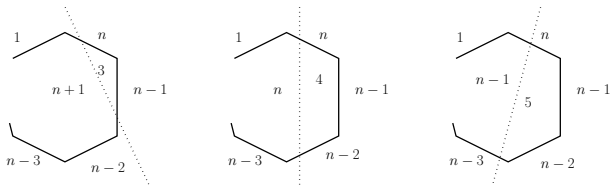}}}
	\caption{An $n$-side region and an arc driven through it skipping $0$ sides (leftmost case), $1$ side (middle case), or $2$ sides (rightmost case). The numbers outside the $n$-sided region enumerate the sides, the numbers inside the regions count the number of sides.}\label{fig:skip2lemma}
\end{figure}
\begin{figure}[!ht]
	\psfrag{3}{\huge$3$}
	\psfrag{4}{\huge$4$}
	\psfrag{1}{\huge$1$}
	\psfrag{5}{\huge$5$}
	\psfrag{n}{\huge$n$}
	\psfrag{n+1}{\huge$n+1$}
	\psfrag{n-1}{\huge$n-1$}
	\psfrag{n-2}{\huge$n-2$}
	\psfrag{n-3}{\huge$n-3$}
	\centerline{\scalebox{.27}{\includegraphics{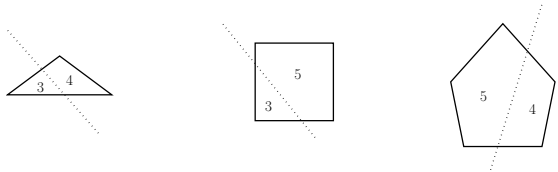}}}
	\caption{Triangles, squares, and pentagons: the net effect of skipping $2$ sides does not result in the breaking down of these polygons into regions with less sides.}\label{fig:skip2lemma345}
\end{figure}
Figure \ref{fig:skip2lemma} shows that it is only as of skipping $2$ sides that an arc drawn through a region is able to break it down into regions with less sides (but more than $2$). Figure \ref{fig:skip2lemma345} shows where this procedure does not work. Specifically, with a $3$-sided region it is not possible to skip $2$ sides. With a $4$-sided region if $2$ sides are skipped, then $0$ sides will be also skipped. With a $5$-sided region if $2$ sides are skipped than $1$ side will also be skipped. This concludes the proof.
\end{proof}

Leaning on the representation of knots or links as closures of braids, for braid words with more than two letters, we prove that any knot or link has a delta diagram. We develop a  technique based on the introduction of kinks (see Figure \ref{fig:kinkification}) that allows us to obtain a delta diagram equivalent to  the braid closure form in which the knot under study was originally represented. We remark that final product is no longer a braid closure.

Figures \ref{fig:10-35} through \ref{fig:10-35-5KINK} below illustrate the case for knot $10_{35}$ regarded as the braid closure of
\[
\sigma_1^{-1}\sigma_2\sigma_3^{-1}\sigma_2\sigma_3^{-1}\sigma_4\sigma_5^{-1}\sigma_4\sigma_5^{-1}\sigma_1\sigma_2^{-1}\sigma_3\sigma_4^{-1}\sigma_2\sigma_2
\]
(\cite{VJones}). In particular the reader should examine first Figure \ref{fig:10-35} and then Figure \ref{fig:10-35-5KINK} since they stand for the beginning and the end of the introduction of kinks. The reader should note that in Figure \ref{fig:10-35} there are regions which do not have the appropriate number of sides, namely, with 2 or with 6 sides in this example (Figure \ref{fig:10-35}); by Figure \ref{fig:10-35-5KINK} the diagram has been transformed into a delta diagram.

Specifically, we introduce a sequence of nested kinks from arc $1$  around the braid entanglement, in the first part of the process, to break down the majority of regions whose number of sides is not $3$, nor $4$, nor $5$, see Figures \ref{fig:10-35-1KINK} through \ref{fig:10-35-4KINK}. Then we introduce a kink from arc $2$ to break down any region left over from Step $1$ (or created in Step $1$), which does not have the appropriate number of sides into regions with the appropriate number of sides, see Figure \ref{fig:10-35-5KINK}.

\begin{figure}[!ht]
	\psfrag{a1}{\Huge\text{arc $1$}}
	\psfrag{k}{\Huge\text{kinkification}}
	\psfrag{s}{\Huge\text{spiralling}}
	\psfrag{...}{$\dots$}
	\centerline{\scalebox{.2}{\includegraphics{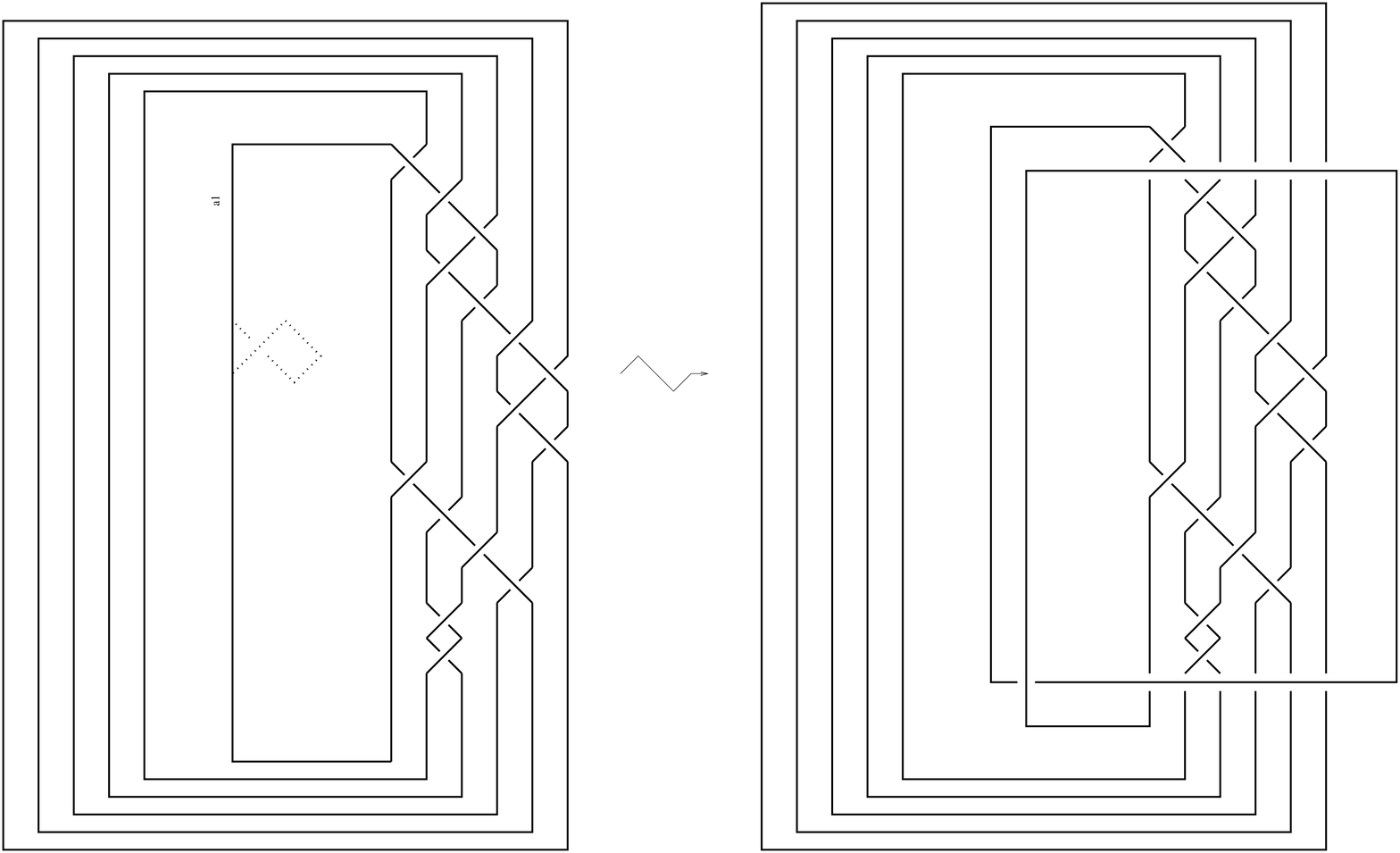}}}
	\caption{Illustrating the  kinkification.}\label{fig:kinkification}
\end{figure}

\begin{figure}[!ht]
	\psfrag{2}{\huge$2$}
	\psfrag{6}{\huge$6$}
	\centerline{\scalebox{.3}{\includegraphics{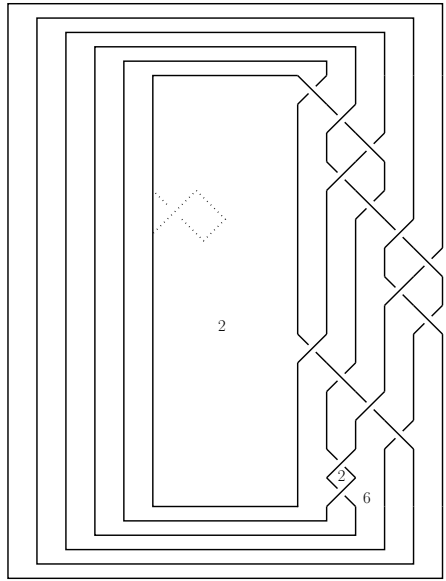}}}
	\caption{Knot $10_{35}$ as the braid closure of $\sigma_1^{-1}\sigma_2\sigma_3^{-1}\sigma_2\sigma_3^{-1}\sigma_4\sigma_5^{-1}\sigma_4\sigma_5^{-1}\sigma_1\sigma_2^{-1}\sigma_3\sigma_4^{-1}\sigma_2\sigma_2$.}\label{fig:10-35}
\end{figure}

\begin{figure}[!ht]
	\psfrag{a1}{\Huge\text{arc $1$}}
	\centerline{\scalebox{.2}{\includegraphics{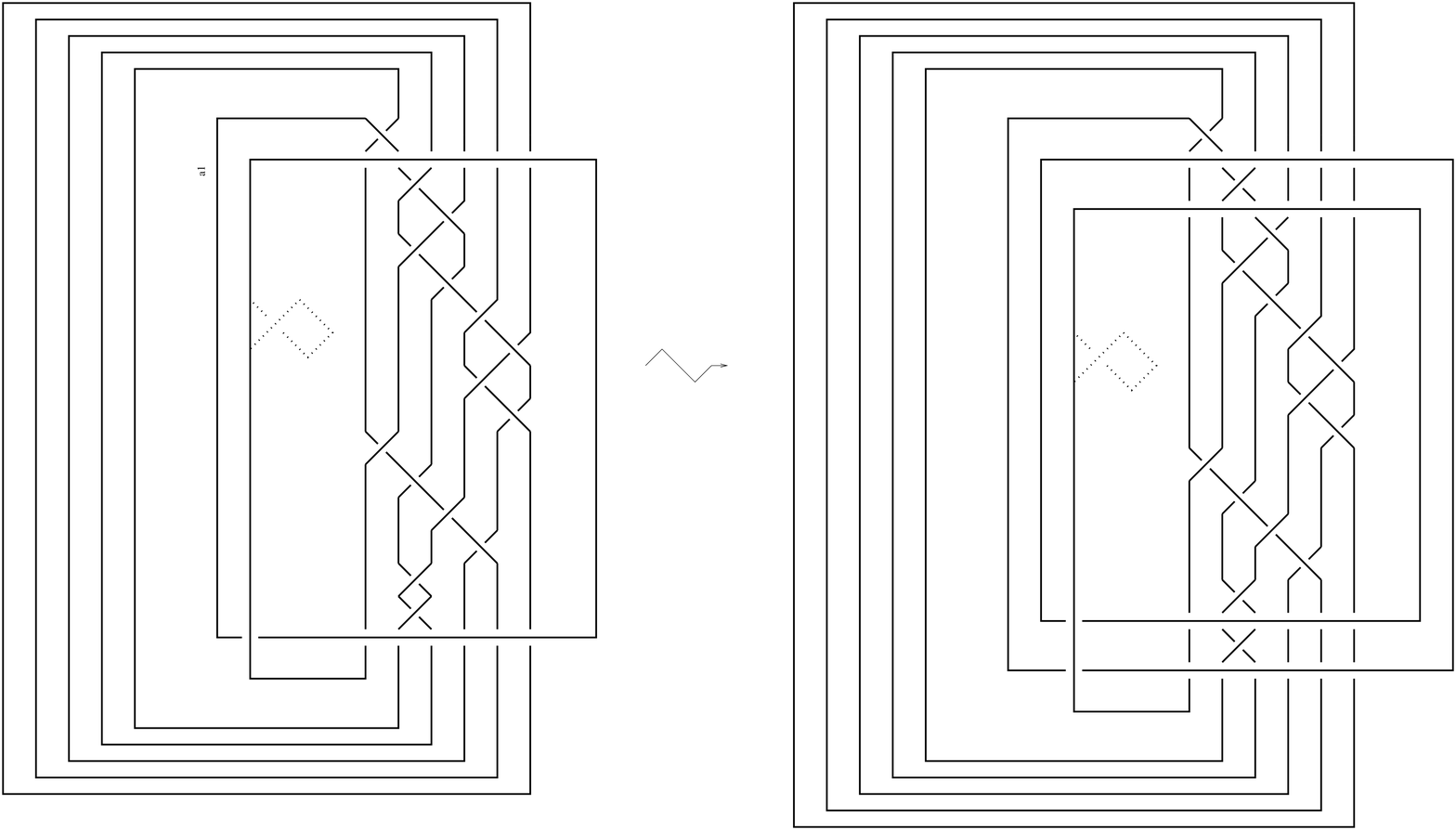}}}
	\caption{Step $1$: introduction of kinks from arc $1$ in order to break down regions within the braid entanglement which do not have the appropriate number of sides.}\label{fig:10-35-1KINK}
\end{figure}
\begin{figure}[!ht]
	\psfrag{a1}{$arc 1$}
	\psfrag{...}{$\dots$}
	\psfrag{max}{\huge\text{(Non-Isolated Lune) Maximal Tassel}}
	\psfrag{nonmax}{\huge\text{(Non-maximal) Tassel}}
	\psfrag{2b-a1}{\Large$2b-a$}
	\psfrag{3b-2a}{\huge$3b-2a$}
	\psfrag{3b-2a1}{\Large$3b-2a$}
	\psfrag{4b-3a}{\large$4b-3a$}
	\centerline{\scalebox{.2}{\includegraphics{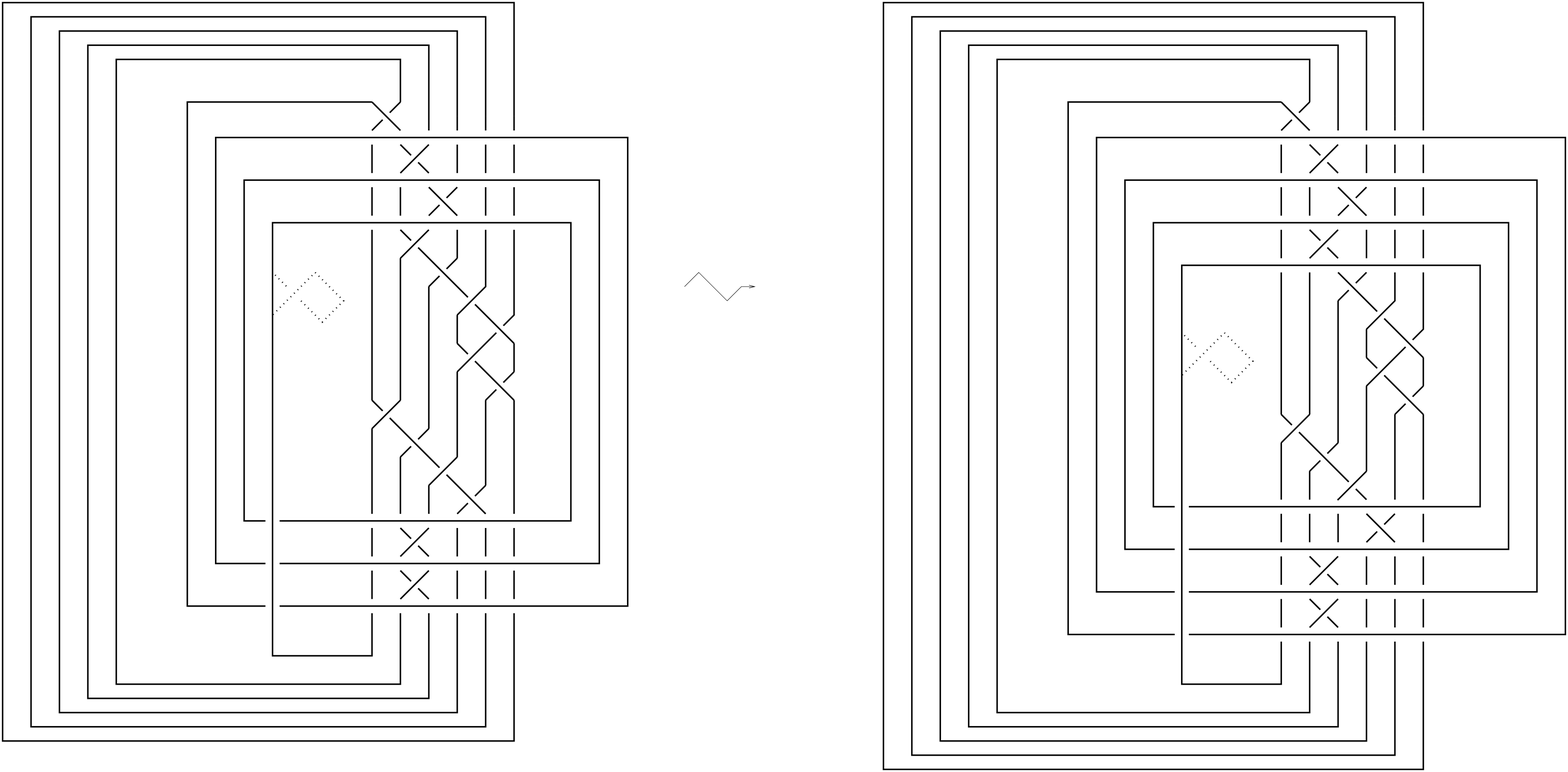}}}
	\caption{Step $1$: further introduction of kinks from arc $1$ in order to break down regions within the braid entanglement which do not have the appropriate number of sides (cont'd).}\label{fig:10-35-2KINK}
\end{figure}

\begin{figure}[!ht]
	\psfrag{a1}{$arc 1$}
	\psfrag{...}{$\dots$}
	\psfrag{max}{\huge\text{(Non-Isolated Lune) Maximal Tassel}}
	\psfrag{nonmax}{\huge\text{(Non-maximal) Tassel}}
	\psfrag{2b-a1}{\Large$2b-a$}
	\psfrag{3b-2a}{\huge$3b-2a$}
	\psfrag{3b-2a1}{\Large$3b-2a$}
	\psfrag{4b-3a}{\large$4b-3a$}
	\centerline{\scalebox{.2}{\includegraphics{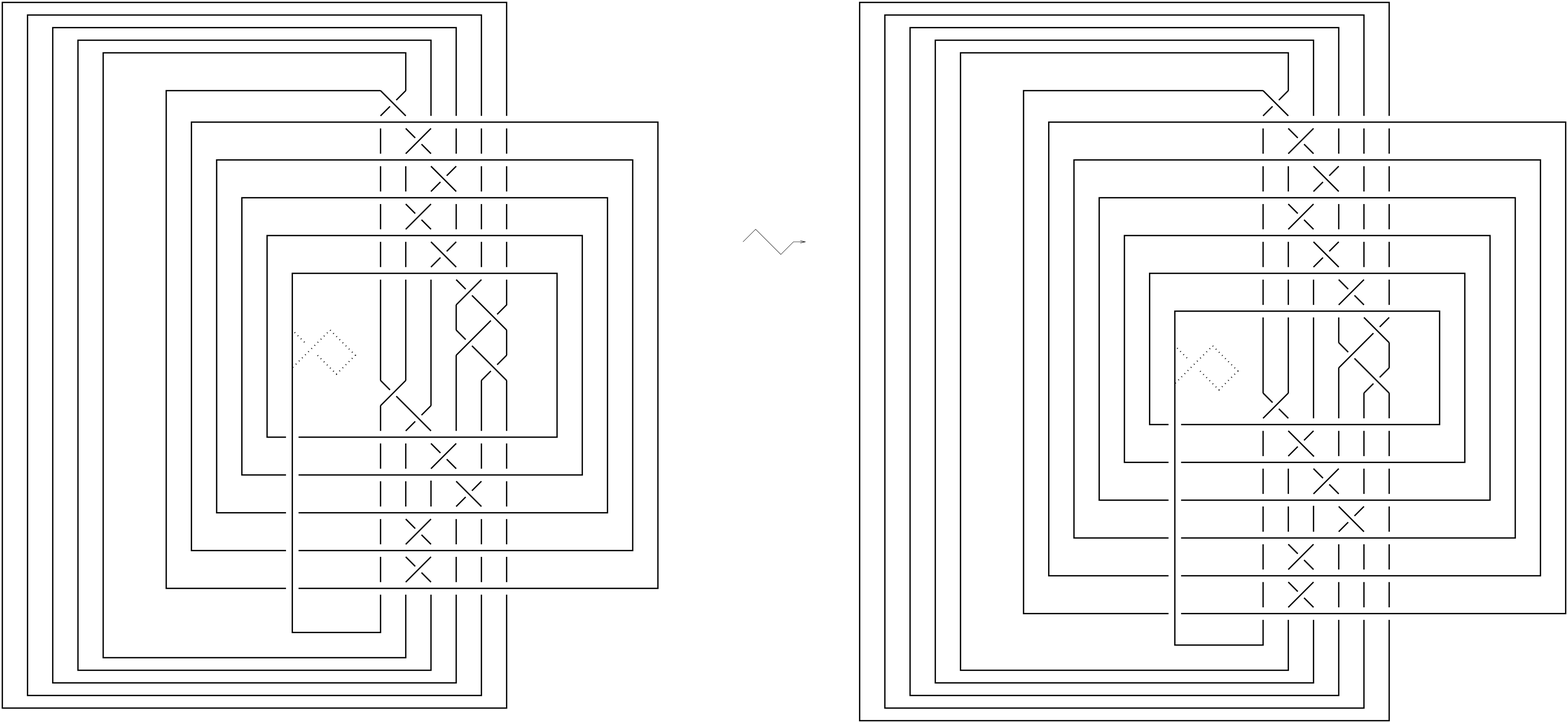}}}
	\caption{Step $1$: further introduction of kinks from arc $1$ in order to break down regions within the braid entanglement which do not have the appropriate number of sides (cont'd).}\label{fig:10-35-3KINK}
\end{figure}

\begin{figure}[!ht]
	\psfrag{a1}{$arc 1$}
	\psfrag{X}{\huge$X$}
	\psfrag{max}{\huge\text{(Non-Isolated Lune) Maximal Tassel}}
	\psfrag{nonmax}{\huge\text{(Non-maximal) Tassel}}
	\psfrag{2b-a1}{\Large$2b-a$}
	\psfrag{3b-2a}{\huge$3b-2a$}
	\psfrag{3b-2a1}{\Large$3b-2a$}
	\psfrag{4b-3a}{\large$4b-3a$}
	\centerline{\scalebox{.2}{\includegraphics{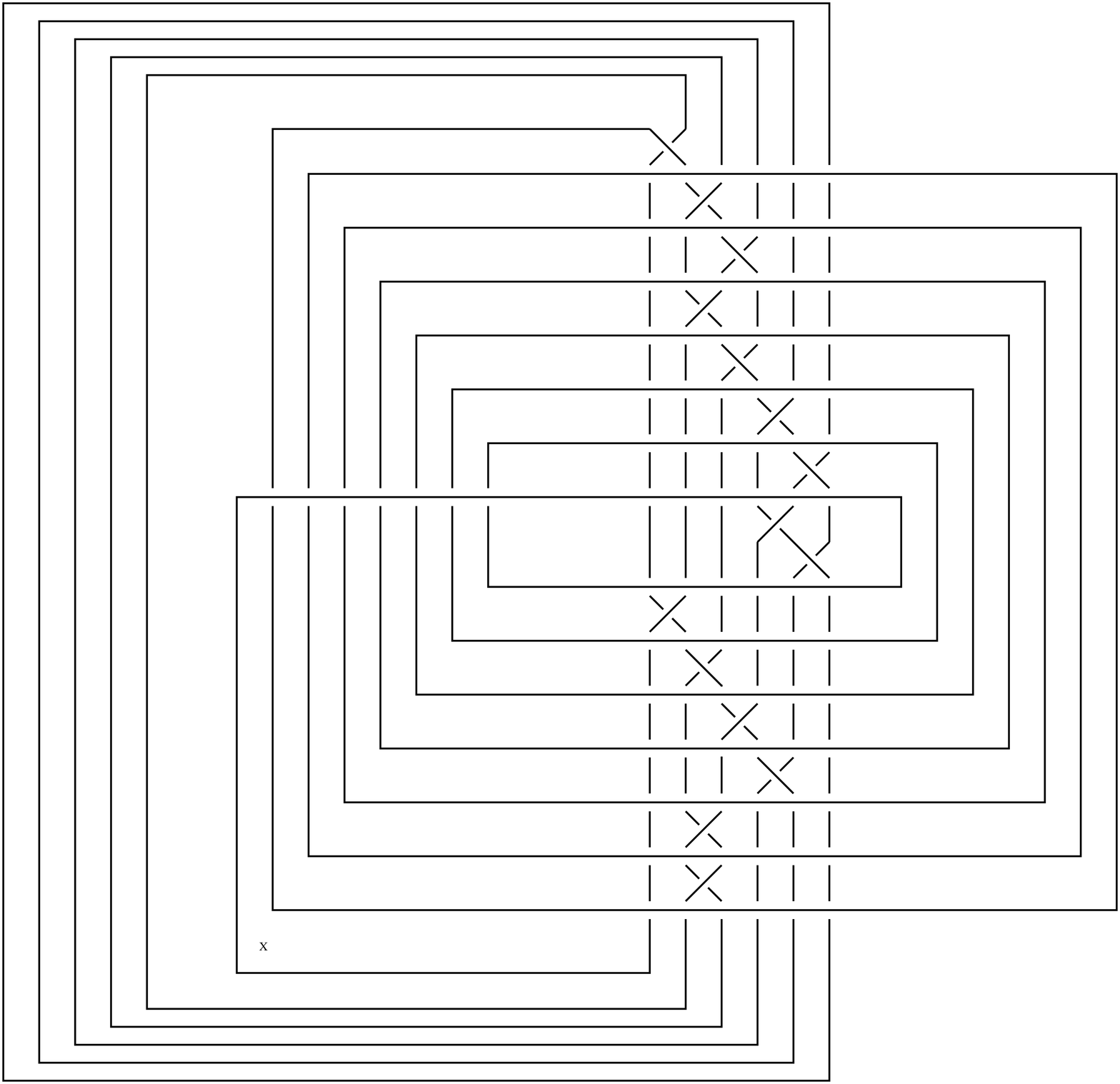}}}
	\caption{Step $1$: further introduction of kinks from arc $1$ in order to break down regions within the braid entanglement which do not have the appropriate number of sides (concl'd). The end of Step $1$.}\label{fig:10-35-4KINK}
\end{figure}

\begin{figure}[!ht]
	\psfrag{...}{$\dots$}
	\psfrag{Tms}{\huge\text{THE Middle Section}}
	\psfrag{nms}{\huge\text{a Non-Middle Section}}
	\psfrag{ttbr}{\huge\text{a Top-to-Bottom Region}}
	\psfrag{lr}{\huge\text{two Lateral Regions}}
	\psfrag{2b-a1}{\Large$2b-a$}
	\psfrag{X}{\huge$X$}
	\psfrag{Y}{\huge$Y$}
	\psfrag{3b-2a1}{\Large$3b-2a$}
	\psfrag{4b-3a}{\large$4b-3a$}
	\centerline{\scalebox{.4}{\includegraphics{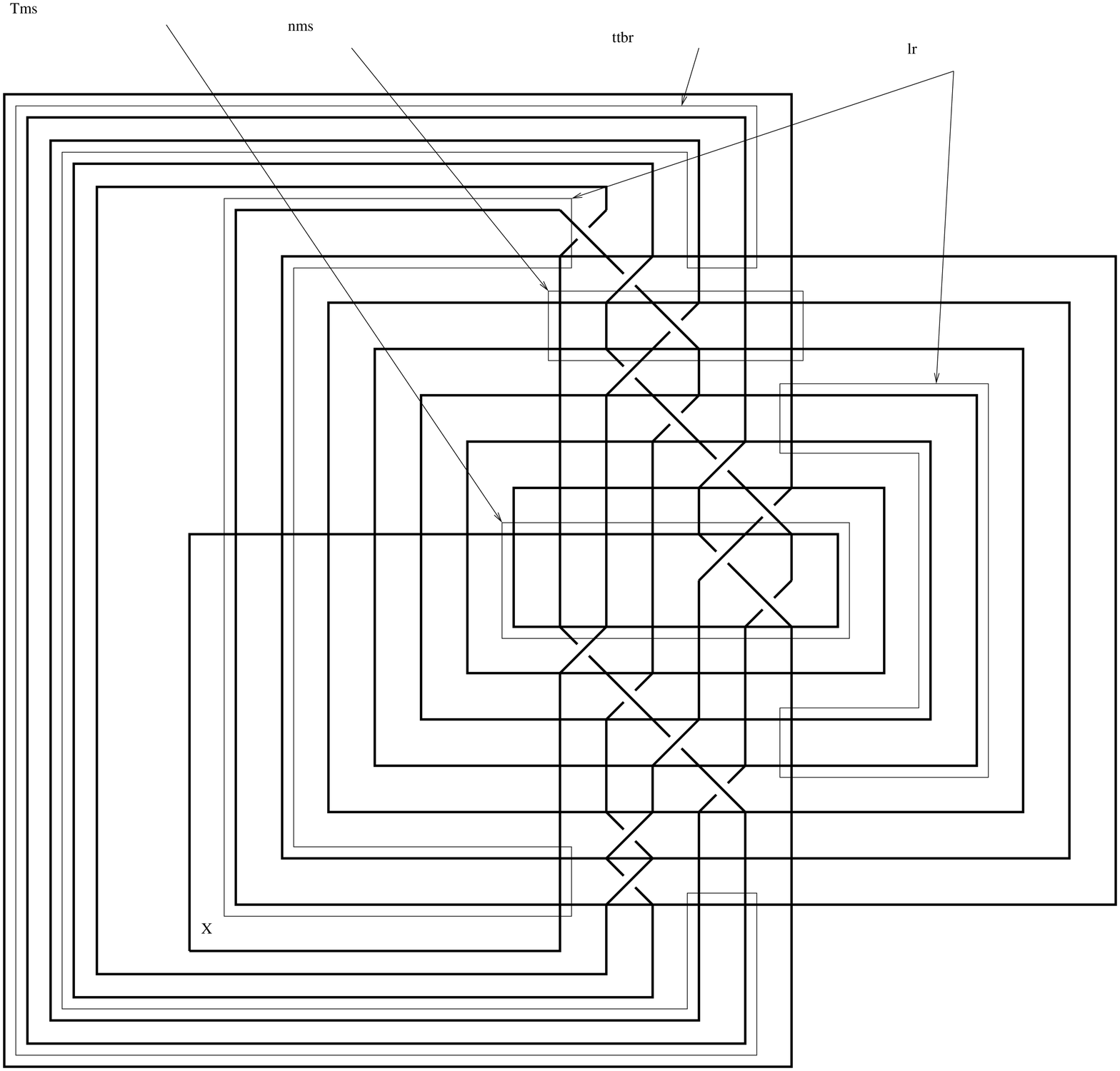}}}
	\caption{Terminology we will be using in the sequel when referring to the status of the process by the end of Step $1$.}\label{fig:terminology}
\end{figure}

\begin{figure}[!ht]
	\psfrag{a2}{\Huge\text{arc $2$}}
	\psfrag{...}{$\dots$}
	\psfrag{max}{\huge\text{(Non-Isolated Lune) Maximal Tassel}}
	\psfrag{nonmax}{\huge\text{(Non-maximal) Tassel}}
	\psfrag{3}{\Large$3$}
	\psfrag{4}{\Large$4$}
	\psfrag{5}{\Large$5$}
	\psfrag{3-}{\large$3$}
	\psfrag{2b-a1}{\Large$2b-a$}
	\psfrag{3b-2a}{\huge$3b-2a$}
	\psfrag{3b-2a1}{\Large$3b-2a$}
	\psfrag{4b-3a}{\large$4b-3a$}
	\centerline{\scalebox{.3}{\includegraphics{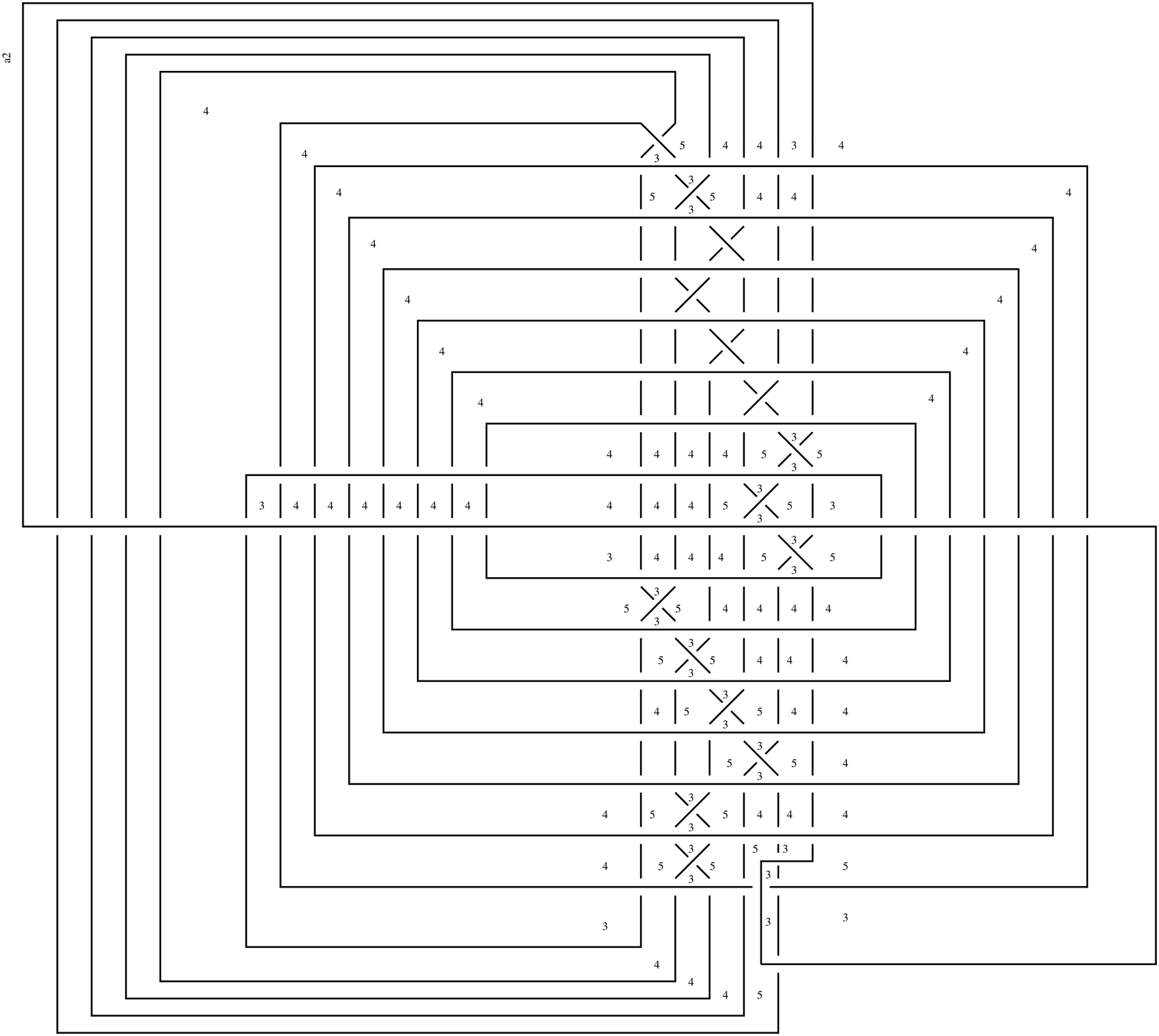}}}
	\caption{Step $2$: introduction of kink from arc $2$ and concluding the saga. The numbers on the regions indicate the number of sides.}\label{fig:10-35-5KINK}
\end{figure}

\section{Proof that knots and links can be represented by delta diagrams.}\label{sect:proof}

\noindent

We now prove the main theorem of this article.

\begin{thm}\label{thm:deltadiagram}
Each knot or link can be represented by a $\Delta$-diagram.
\end{thm}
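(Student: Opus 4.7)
The plan is to leverage Alexander's theorem to represent any link by a braid closure and then perform a two-step ``kinkification'' that cuts every offending region down to a triangle, tetragon, or pentagon. In a braid closure, the problematic regions are of three types: the lunes stacked between consecutive letters of the braid word, the typically larger column regions that span between two generators $\sigma_i^{\pm 1}$ and $\sigma_j^{\pm 1}$ sharing no strand, and the two lateral frame regions together with the unbounded region on the outside. By Lemma \ref{lem:main}, an arc that skips exactly two sides of a region splits it into two regions with fewer sides, while regions that already have 3, 4, or 5 sides are stable. My job therefore reduces to routing additional arcs, produced by Reidemeister I kinks, through each offending region so that the resulting sides count always lies in $\{3,4,5\}$.

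For Step $1$, I fix an outermost strand of the closure and call it arc $1$, and apply kinkification by introducing a sequence of nested Reidemeister I kinks whose long arcs spiral around the braid entanglement, as in Figure \ref{fig:kinkification}. Each successive kink contributes a pair of new crossings on arc $1$ together with one arc that enters and exits each column of the braid between a chosen pair of adjacent generators. Choosing the radii of the nested spirals so that each successive pass drops one level in the braid realises, column by column, the skip-two operation of Lemma \ref{lem:main}: each pass through a column splits it into two strictly smaller regions, converting the tassels $b,\,2b{-}a,\,3b{-}2a,\,4b{-}3a,\dots$ of Figures \ref{fig:10-35-2KINK}–\ref{fig:10-35-4KINK} into triangles, tetragons, and pentagons. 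Iterating the spiral a number of times bounded by the height of the braid exhausts the braid entanglement, leaving only the controlled exceptions singled out in Figure \ref{fig:terminology}: a central ``middle section'' and a small number of top-to-bottom and lateral regions.

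In Step $2$, I introduce a single further kink from a second outer arc (arc $2$) on the opposite side of the diagram, whose spiral pass is designed precisely to cut the middle section and the leftover lateral and top-to-bottom regions of Step $1$, again obeying the skip-two rule so that each leftover region is broken into regions of 3, 4, or 5 sides, as in Figure \ref{fig:10-35-5KINK}. Both steps are sequences of Reidemeister I moves, so the resulting diagram represents the same link. Because every region of the original braid has by then been processed and every newly created region has either 3, 4, or 5 sides by construction, the resulting diagram is a delta diagram, proving the theorem.

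The main obstacle is the bookkeeping. For an arbitrary braid word in $\sigma_1^{\pm 1},\dots,\sigma_{n-1}^{\pm 1}$ one must verify that the spiraling arcs of Step $1$ can always be routed so that (a) every column region, every lune, and every frame region is hit by a pattern of skip-two cuts that lands strictly inside $\{3,4,5\}$; (b) no new lunes or $\geq 6$-sided regions are inadvertently produced either along the spiral itself or at the kink endpoints; and (c) the finitely many exceptional regions left for Step $2$ are exactly those that the single arc-$2$ kink can dispatch. This requires a case analysis on consecutive generators (equal-index opposite-sign, adjacent-index, and distant-index), and a separate analysis of the top row, the bottom row, and the two lateral frame regions. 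Once these local checks are in place, Lemma \ref{lem:main} delivers the conclusion region by region.
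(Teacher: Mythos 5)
Your proposal follows essentially the same route as the paper: represent the link as a braid closure, spiral nested kinks from arc $1$ around the braid entanglement in Step $1$, finish with a single kink from arc $2$ in Step $2$, and reduce the verification to a case analysis of the middle section, the lateral regions, and the top-to-bottom regions via the skip-two lemma, which is exactly the structure of the published argument (which likewise delegates those case checks to figures). The only quibble is that sliding the long arc of each kink across the braid entanglement requires Reidemeister II (and III) moves rather than only Reidemeister I, but this does not affect the conclusion that the final diagram represents the same link.
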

\begin{proof}

The proof for the Hopf link is in Figure \ref{fig:hopflink}.

The reader should look at Figures \ref{fig:10-35} and \ref{fig:10-35-5KINK} since they stand for the beginning and the end of the kinkification applied to a knot, in this case $10_{35}$.

The proof relies on the fact that each knot or link can be represented as the closure of a braid (cf. \cite{Birman}). We remark that we insist on having a braid word with an even number of $\sigma_i^{\pm 1}$. Should we not have it, we add a last ``$\sigma$'' which is the identity. Furthermore, as illustrated with the knot $10_{35}$ (see Figures \ref{fig:10-35}, \ref{fig:10-35-1KINK}, \ref{fig:10-35-2KINK}, \ref{fig:10-35-3KINK}, \ref{fig:10-35-4KINK}, \ref{fig:10-35-5KINK}, and also Figure \ref{fig:terminology} for terminology), our technique splits the diagram obtained from the closure of a braid into a new diagram whose regions, or certain unions of regions which we call Sections, fall into the following categories, at the end of the Step $1$: Non-middle Sections, the Middle Section, Lateral Regions, Top-to-Bottom Regions, see Figure \ref{fig:terminology}. This proof analyzes these regions and sections, and checks that those that do not yet have the required number of sides by the end of Step $1$, will obtain them upon the performance of Step $2$. We now refer the reader to Figures \ref{fig:nonmiddlesection}, \ref{fig:middlesection}, \ref{fig:lateralregions-new}, \ref{fig:lateralregionsconc}, \ref{fig:toptobottom-new}, \ref{fig:toptobottomregionsc}, \ref{fig:toptobottomconc-new} where these checks are done. This concludes the proof.
\end{proof}

\begin{figure}[!ht]
	\psfrag{...}{$\dots$}
	\psfrag{.}{$\vdots$}
	\psfrag{(i)}{\huge$(i)$}
	\psfrag{(ii)}{\huge$(ii)$}
	\psfrag{(iii)}{\huge$(iii)$}
	\psfrag{(iv)}{\huge$(iv)$}
	\psfrag{(v)}{\huge$(v)$}
	\psfrag{(vi)}{\huge$(vi)$}
	\psfrag{nonmax}{\huge\text{(Non-maximal) Tassel}}
	\psfrag{2}{\Large$2$}
	\psfrag{3}{\Large$3$}
	\psfrag{4}{\Large$4$}
	\psfrag{5}{\Large$5$}
	\psfrag{6}{\Large$6$}
	\psfrag{2b-a1}{\Large$2b-a$}
	\psfrag{3b-2a}{\huge$3b-2a$}
	\psfrag{3b-2a1}{\Large$3b-2a$}
	\psfrag{4b-3a}{\large$4b-3a$}
	\centerline{\scalebox{.4}{\includegraphics{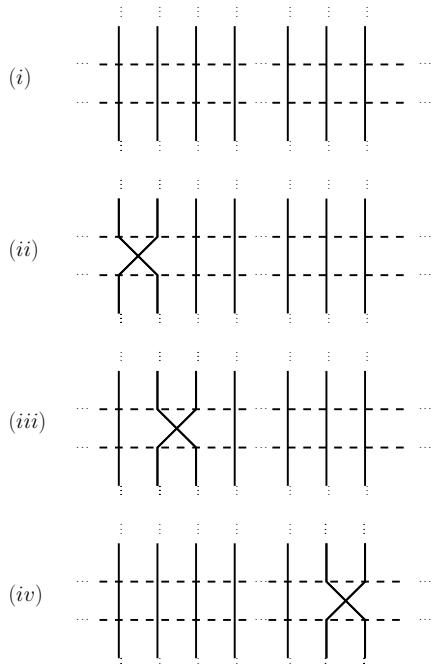}}}
	\caption{Possible instances for non-middle sections. The broken line stands for consecutive stretches of the kinks introduced from arc $1$ around the braid entanglement. Each region in any of these instances has the required number of sides.}\label{fig:nonmiddlesection}
\end{figure}

\begin{figure}[!ht]
	\psfrag{...}{$\dots$}
	\psfrag{.}{$\vdots$}
	\psfrag{(i)}{\huge$(i)$}
	\psfrag{(ii)}{\huge$(ii)$}
	\psfrag{(iii)}{\huge$(iii)$}
	\psfrag{(iv)}{\huge$(iv)$}
	\psfrag{(v)}{\huge$(v)$}
	\psfrag{(vi)}{\huge$(vi)$}
	\psfrag{(vii)}{\huge$(vii)$}
	\psfrag{nonmax}{\huge\text{(Non-maximal) Tassel}}
	\psfrag{2}{\Large$2$}
	\psfrag{3}{\Large$3$}
	\psfrag{4}{\Large$4$}
	\psfrag{5}{\Large$5$}
	\psfrag{6}{\Large$6$}
	\psfrag{2b-a1}{\Large$2b-a$}
	\psfrag{3b-2a}{\huge$3b-2a$}
	\psfrag{3b-2a1}{\Large$3b-2a$}
	\psfrag{4b-3a}{\large$4b-3a$}
	\centerline{\scalebox{.4}{\includegraphics{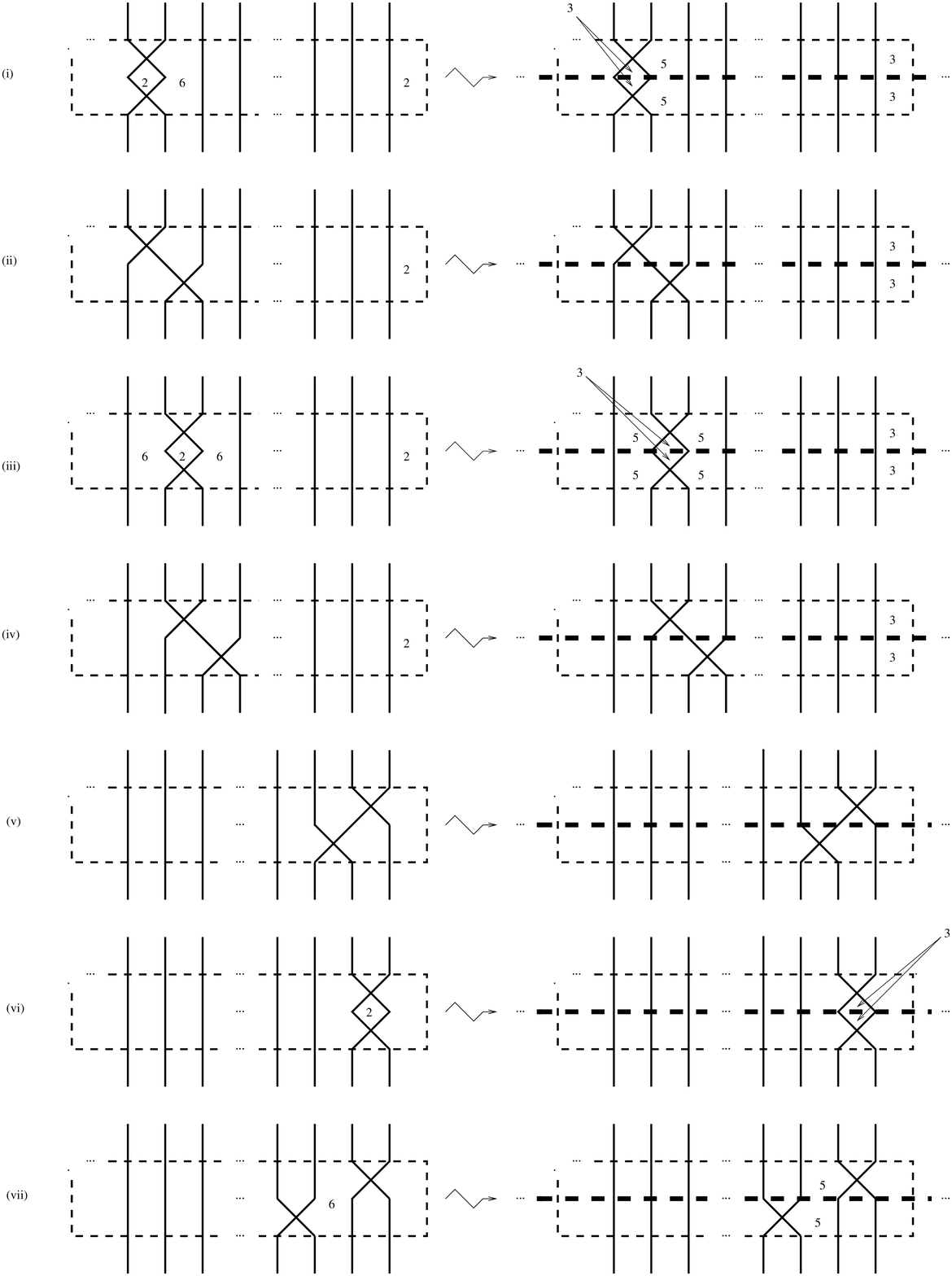}}}
	\caption{Possible instances for the middle section. Left-hand side: last kink introduced from arc $1$ (thin broken line) around the braid entanglement. Right-hand side: the contribution of  arc $2$ (thick broken line) in breaking down the unwanted regions into the right ones. Numbers indicate numbers of sides in relevant regions at this stage of the process. Absence of such numbers means the region at issue already has the expected the number of sides.}\label{fig:middlesection}
\end{figure}

\begin{figure}[!ht]
	\psfrag{a2}{\Large\text{arc $2$}}
	\psfrag{MS}{\Large$\text{middle section}$}
	\psfrag{...}{$\dots$}
	\psfrag{.}{$\vdots$}
	\psfrag{,}{$\iddots$}
	\psfrag{;}{$\ddots$}
	\psfrag{(i)}{\huge$(i)$}
	\psfrag{(ii)}{\huge$(ii)$}
	\psfrag{(iii)}{\huge$(iii)$}
	\psfrag{2}{\Large$2$}
	\psfrag{3}{\Large$3$}
	\psfrag{4}{\Large$4$}
	\psfrag{5}{\Large$5$}
	\psfrag{6}{\Large$6$}
	\centerline{\scalebox{.4}{\includegraphics{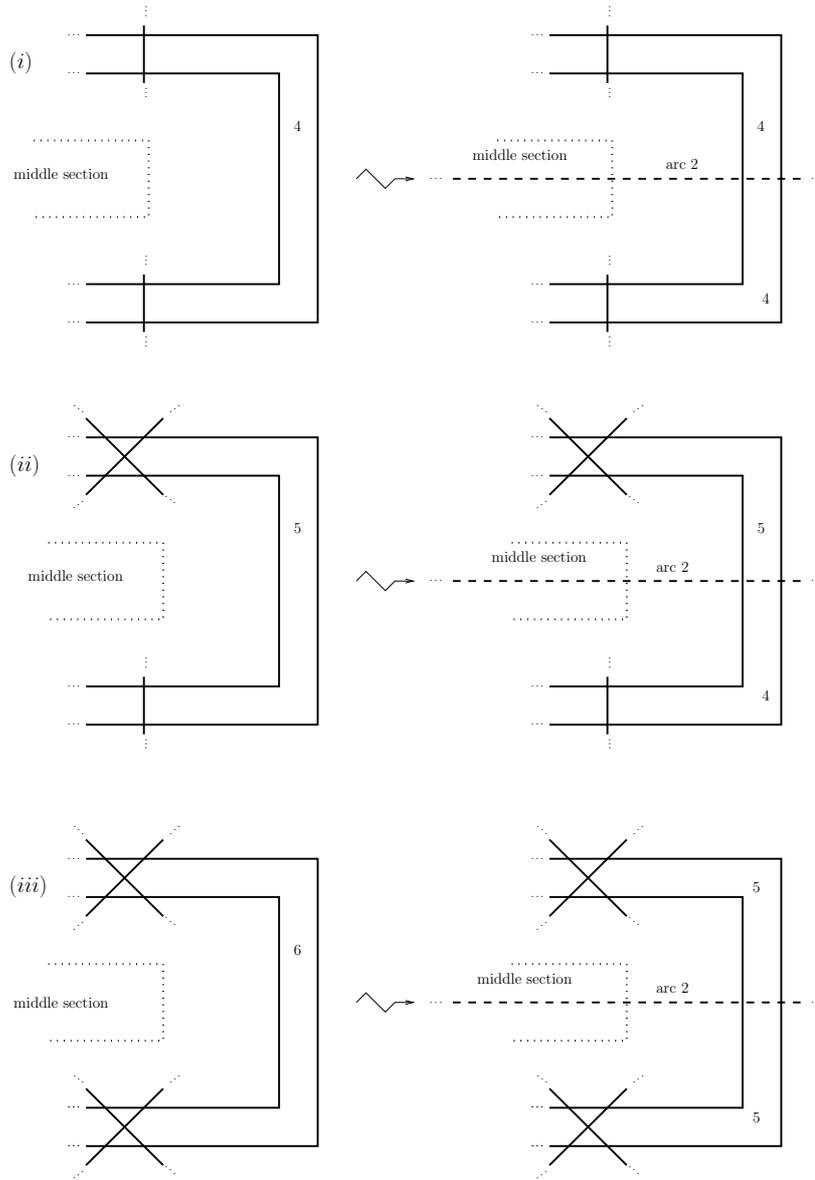}}}
	\caption{Possible instances for the lateral regions. Top row: no changes needed for the lateral regions already have the desired number of sides. Bottom row, left-hand side: last kink introduced from arc $1$ (thin broken line) around the braid entanglement. Bottom row, right-hand side: the contribution of arc $2$ (thick broken line) in breaking down the unwanted regions into the right ones. Numbers indicate numbers of sides in relevant regions at this stage of the process. Absence of such numbers means the region at issue already has the expected number of sides. The lateral regions that show up on the left-hand side of the diagram in Figure \ref{fig:10-35-4KINK} are analogous to the instances $(i)$ through $(iii)$ in the current Figure except that the last swing of the first arc already breaks them down into regions with the required number of sides. There is only one exception here which is the region labeled $X$ in Figure \ref{fig:10-35-4KINK} which is instance $(iv)$ depicted and dealt with in Figure \ref{fig:lateralregionsconc}.}\label{fig:lateralregions-new}
\end{figure}

\begin{figure}[!ht]
	\psfrag{a1}{\Large\text{arc $1$}}
	\psfrag{a2}{\Large\text{arc $2$}}
	\psfrag{MS}{\Large$\text{middle section}$}
	\psfrag{...}{$\dots$}
	\psfrag{.}{$\vdots$}
	\psfrag{,}{$\iddots$}
	\psfrag{;}{$\ddots$}
	\psfrag{(iv)}{\huge$(iv)$}
	\psfrag{2}{\Large$2$}
	\psfrag{3}{\Large$3$}
	\centerline{\scalebox{.4}{\includegraphics{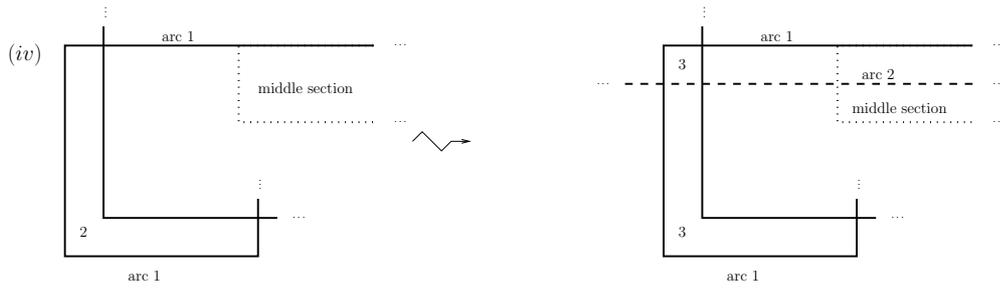}}}
	\caption{Possible instances for the lateral regions (concl'd).}\label{fig:lateralregionsconc}
\end{figure}

\begin{figure}[!ht]
	\psfrag{a2}{\Large\text{arc $2$}}
	\psfrag{MS}{\Large$\text{middle section}$}
	\psfrag{la}{\Large$\text{last arc}$}
	\psfrag{1a}{\Large$\text{first arc}$}
	\psfrag{...}{$\dots$}
	\psfrag{.}{$\vdots$}
	\psfrag{,}{$\iddots$}
	\psfrag{;}{$\ddots$}
	\psfrag{(i)}{\huge$(i)$}
	\psfrag{(ii)}{\huge$(ii)$}
	\psfrag{(iii)}{\huge$(iii)$}
	\psfrag{(iv)}{\huge$(iv)$}
	\psfrag{(v)}{\huge$(v)$}
	\psfrag{(vi)}{\huge$(vi)$}
	\psfrag{nonmax}{\huge\text{(Non-maximal) Tassel}}
	\psfrag{2}{\Large$2$}
	\psfrag{3}{\Large$3$}
	\psfrag{4}{\Large$4$}
	\psfrag{5}{\Large$5$}
	\psfrag{6}{\Large$6$}
	\psfrag{2b-a1}{\Large$2b-a$}
	\psfrag{3b-2a}{\huge$3b-2a$}
	\psfrag{3b-2a1}{\Large$3b-2a$}
	\psfrag{4b-3a}{\large$4b-3a$}
	\centerline{\scalebox{.4}{\includegraphics{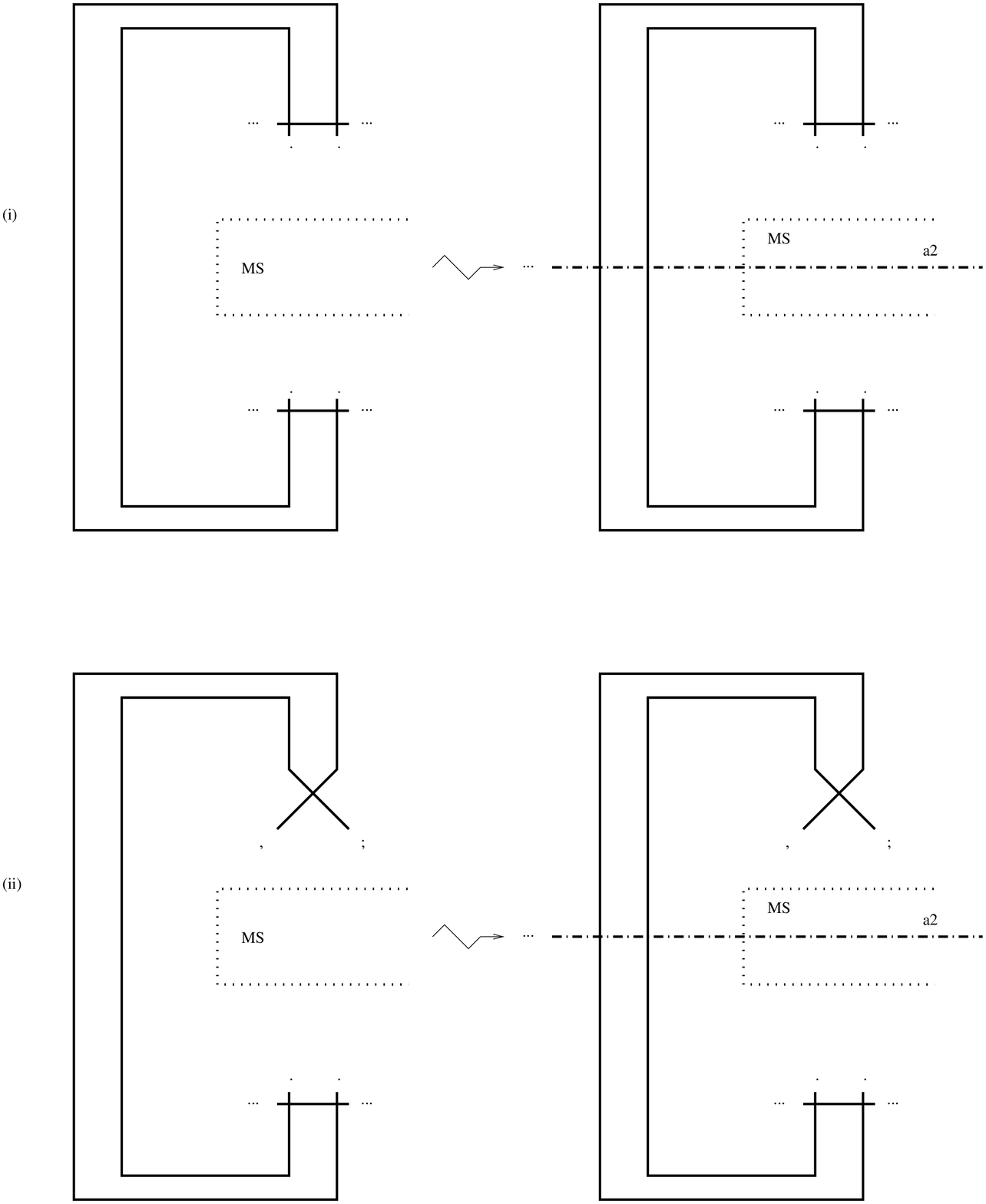}}}
	\caption{Possible instances for the top-to-bottom regions.}\label{fig:toptobottom-new}
\end{figure}

\begin{figure}[!ht]
	\psfrag{a2}{\Large\text{arc $2$}}
	\psfrag{MS}{\Large$\text{middle section}$}
	\psfrag{la}{\Large$\text{last arc}$}
	\psfrag{1a}{\Large$\text{first arc}$}
	\psfrag{...}{$\dots$}
	\psfrag{.}{$\vdots$}
	\psfrag{,}{$\iddots$}
	\psfrag{;}{$\ddots$}
	\psfrag{(i)}{\huge$(i)$}
	\psfrag{(ii)}{\huge$(ii)$}
	\psfrag{(iii)}{\huge$(iii)$}
	\psfrag{(iv)}{\huge$(iv)$}
	\psfrag{(v)}{\huge$(v)$}
	\psfrag{(vi)}{\huge$(vi)$}
	\psfrag{nonmax}{\huge\text{(Non-maximal) Tassel}}
	\psfrag{2}{\Large$2$}
	\psfrag{3}{\Large$3$}
	\psfrag{4}{\Large$4$}
	\psfrag{5}{\Large$5$}
	\psfrag{6}{\Large$6$}
	\psfrag{2b-a1}{\Large$2b-a$}
	\psfrag{3b-2a}{\huge$3b-2a$}
	\psfrag{3b-2a1}{\Large$3b-2a$}
	\psfrag{4b-3a}{\large$4b-3a$}
	\centerline{\scalebox{.4}{\includegraphics{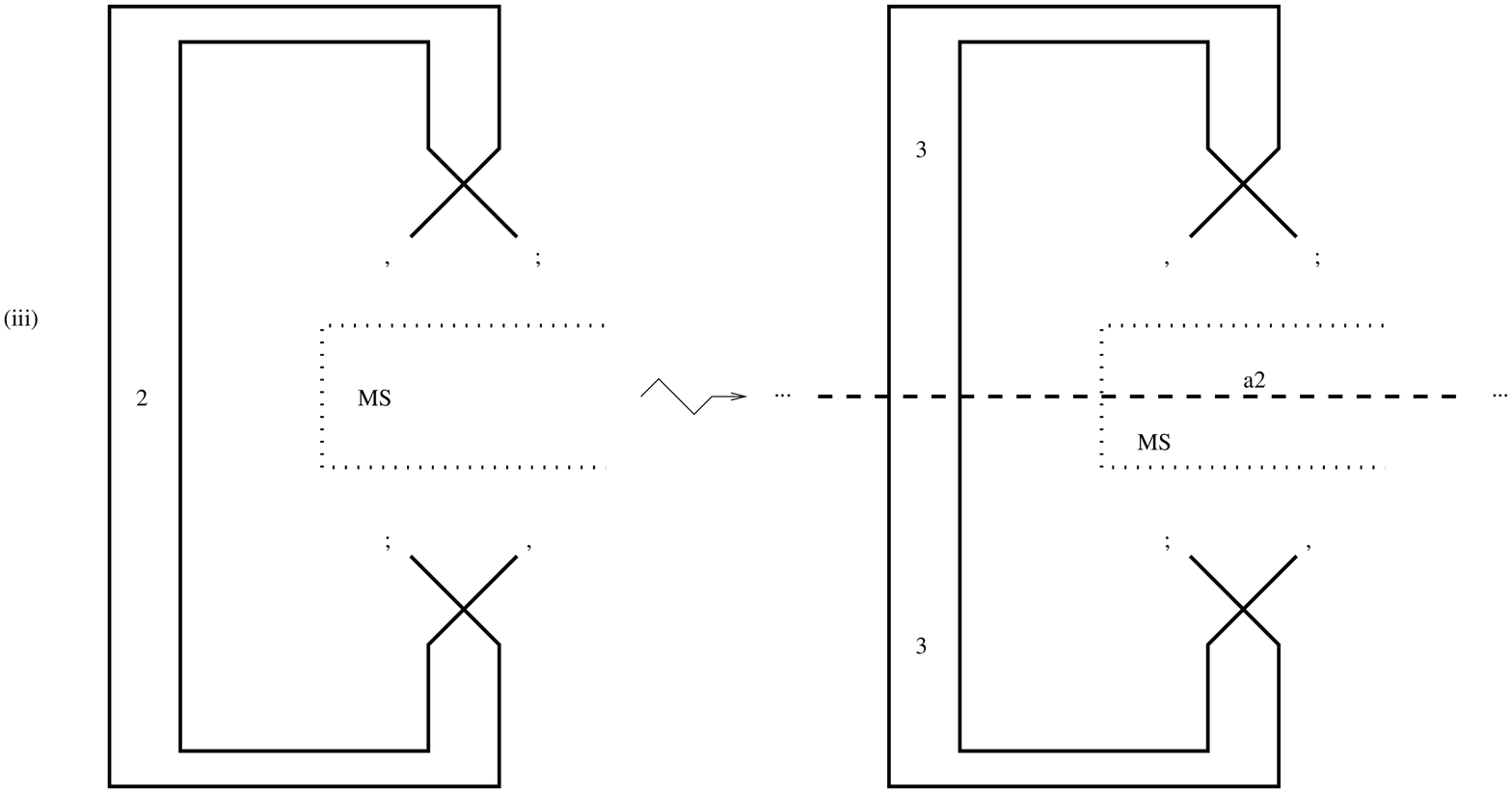}}}
	\caption{Possible instances for the top-to-bottom regions (cont'd).}\label{fig:toptobottomregionsc}
\end{figure}

\begin{figure}[!ht]
	\psfrag{a2}{\Large\text{arc $2$}}
	\psfrag{MS}{\Large$\text{middle section}$}
	\psfrag{la}{\Large$\text{last arc}$}
	\psfrag{1a}{\Large$\text{first arc}$}
	\psfrag{...}{$\dots$}
	\psfrag{.}{$\vdots$}
	\psfrag{,}{$\iddots$}
	\psfrag{;}{$\ddots$}
	\psfrag{(i)}{\huge$(i)$}
	\psfrag{(ii)}{\huge$(ii)$}
	\psfrag{(iii)}{\huge$(iii)$}
	\psfrag{(iv)}{\huge$(iv)$}
	\psfrag{(v)}{\huge$(v)$}
	\psfrag{(vi)}{\huge$(vi)$}
	\psfrag{nonmax}{\huge\text{(Non-maximal) Tassel}}
	\psfrag{2}{\Large$2$}
	\psfrag{3}{\Large$3$}
	\psfrag{4}{\Large$4$}
	\psfrag{5}{\Large$5$}
	\psfrag{6}{\Large$6$}
	\psfrag{2b-a1}{\Large$2b-a$}
	\psfrag{3b-2a}{\huge$3b-2a$}
	\psfrag{3b-2a1}{\Large$3b-2a$}
	\psfrag{4b-3a}{\large$4b-3a$}
	\centerline{\scalebox{.4}{\includegraphics{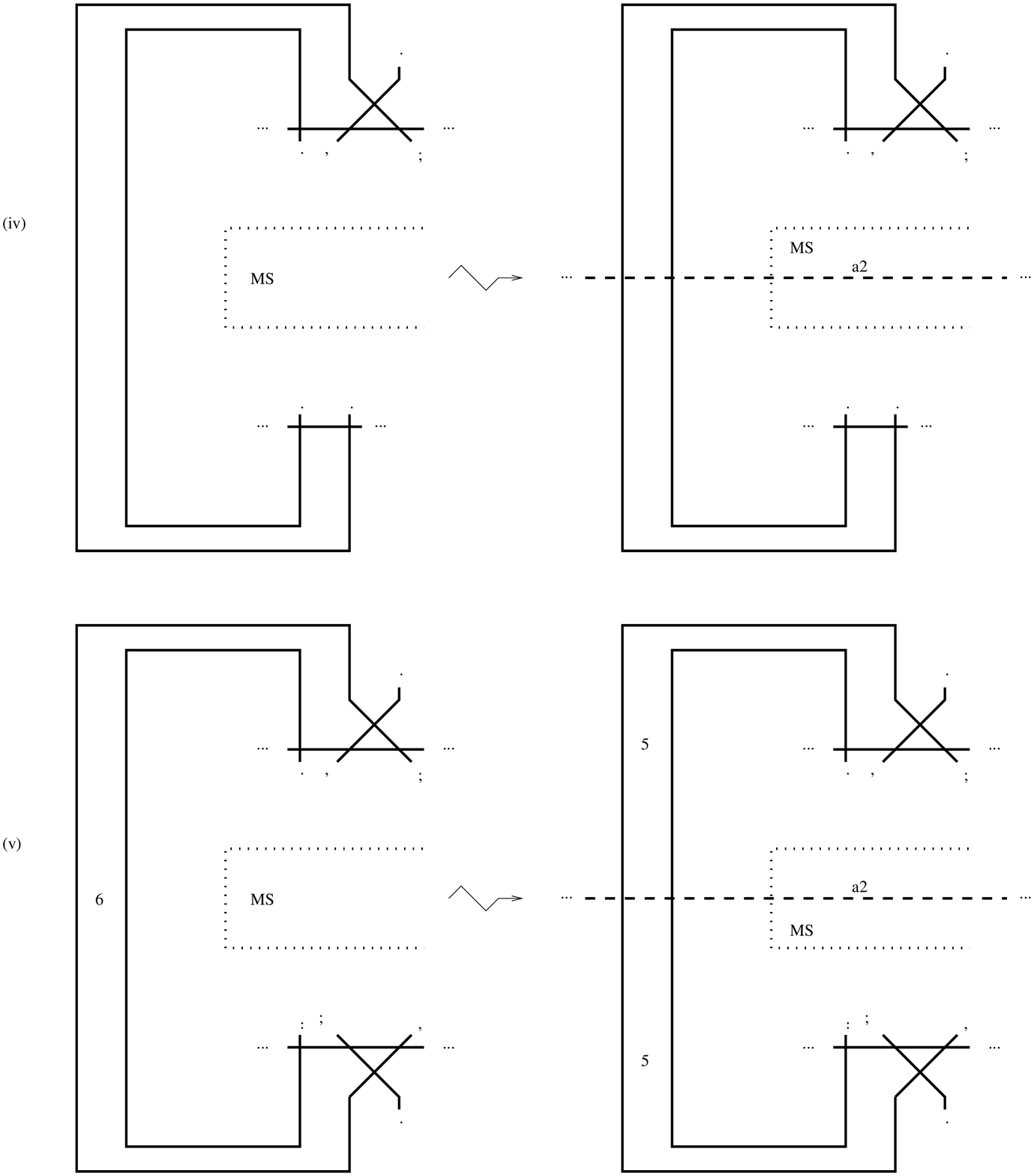}}}
	\caption{Possible instances for the top-to-bottom regions (concl'd).}\label{fig:toptobottomconc-new}
\end{figure}

\subsection{Consequences of the kinkification}\label{subsect:conseqkinks}

We observe that at the end of the Step $1$ of our kinkification of the braid closure in order to obtain a delta diagram, we obtain a braid closure again. It is only after Step $2$ of the process that we end up with a diagram other than a braid closure. It would be interesting if it would be possible to modify Step $2$ so that the final product would be a braid closure.

Furthermore, we have control over the number of crossings of our final product.

\begin{thm}
Let $L$ be a link and $n$ be the crossing number of $L$. Then the increase in the number of crossings of the delta diagram obtained by kinkification as described in the proof of Theorem \ref{thm:deltadiagram} $($when compared with the number of crossings of a minimal diagram of $L)$ is bounded above by
\[
N^2  + 4N + 3
\]
where $N = n + (n-1)(n-2)$
\end{thm}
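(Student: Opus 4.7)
The plan is to bound the crossing count in two successive stages: the initial braidification that converts a minimal diagram of $L$ into the closed braid form to which the kinkification of Theorem \ref{thm:deltadiagram} is applied, and then the kinkification itself.

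First I would apply a standard braidification (Yamada--Vogel) to a minimal diagram of $L$ with $n$ crossings. A bound on the extra crossings introduced by the Seifert-circle-reducing moves yields a braid closure with at most $N=n+(n-1)(n-2)$ crossings, on a braid whose index $b$ is bounded by a linear function of $N$. This is the starting diagram for the kinkification.

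Next I would quantify Step 1 of the kinkification. Each time arc $1$ spirals once around the braid entanglement (cf.\ Figure \ref{fig:kinkification}), it crosses every strand of the braid twice, so one revolution adds at most $2b$ crossings (plus a bounded constant for the turning kink itself). The number of nested kinks required in Step 1 is controlled by the number of regions of the starting braid-closure diagram that do not already have $3$, $4$, or $5$ sides; by Euler's formula applied to the underlying $4$-valent planar graph the total region count is at most $N+2$. Multiplying a linear-in-$N$ bound on the number of nested kinks by a linear-in-$N$ bound on the crossings created by each kink yields a quadratic-in-$N$ bound on the crossings produced in Step 1. Step 2 adds only one further kink, from arc $2$, which sweeps across the middle section and finishes the remaining non-middle sections, lateral regions, and top-to-bottom regions; it is straightforward to see that this contributes only $O(N)$ additional crossings. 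Summing the contributions from the braidification, Step 1, and Step 2 and carrying out the arithmetic yields the bound $N^2+4N+3$, which factors conveniently as $(N+1)(N+3)$.

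The main obstacle will be the precise bookkeeping in Step 1. The outer kinks are numerous but each spirals only a small number of times and encounters relatively few strands per swing; the inner kinks wrap around more strands but there are fewer of them, and at each stage one must know exactly which sections of the kinked region are still bad. Matching these two counts precisely, rather than merely asymptotically, is what is needed to obtain the exact constants $4$ and $3$ in $N^2+4N+3$, and I expect this combinatorial accounting to constitute the bulk of the work. A secondary subtlety is that one must verify that the linear bound on the braid index $b$ in terms of $N$ is the correct one to use (rather than, say, $b \le N+1$ independently), since the final constant depends on whether the $+4N+3$ absorbs the $N-n$ coming from the braidification or whether it arises entirely from the kinkification.
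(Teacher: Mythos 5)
Your overall architecture matches the paper's: first braidify a minimal diagram via Vogel's algorithm to get a closed braid with at most $N = n + (n-1)(n-2)$ crossings (using that the number of Seifert circles is bounded by the crossing number), then bound Step 1 of the kinkification quadratically in $N$ and Step 2 linearly. The Step 2 accounting you sketch is also essentially what the paper does: arc $2$ crosses the original braid strands ($\leq N$), the Step-1 arcs twice ($\leq 2N$), the middle section ($\leq N$), plus $3$ closing crossings, for a total of $4N+3$.

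The genuine gap is in your count of the number of kinks in Step 1, and it is not merely bookkeeping you have deferred --- your proposed bound cannot produce the stated constant. You bound the number of nested kinks by the number of bad regions of the braid-closure diagram, which by Euler's formula is roughly $N+2$; since each kink contributes up to $2b \leq 2N$ crossings as it sweeps around the entanglement, your Step 1 total comes out near $2N^2$, which exceeds $N^2 + 4N + 3$ for large $N$. The paper instead uses the structure of the construction directly: one kink is introduced for each \emph{pair of letters} of the braid word (this is why the proof of Theorem \ref{thm:deltadiagram} insists on an even-length braid word), so there are at most $N/2$ kinks, each hovering about the entanglement twice for at most $2b \leq 2N$ crossings, giving $(N/2)\cdot 2N = N^2$ for Step 1. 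Without tying the kink count to the braid word length rather than to the region count, the coefficient $1$ on $N^2$ is out of reach, and your concluding claim that ``carrying out the arithmetic yields $N^2+4N+3$'' does not follow from the bounds you set up.
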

\begin{proof}
We begin by noting that the number of Seifert circles for any diagram of $L$ is bounded above by the number of crossings. This follows from the fact that, since $L$ does not have trivial components, then the number of Seifert circles is maximized on a diagram where as we go along it, from each crossing there stems a Seifert circle.

Also, the braid index is bounded above by the number of crossings.

We now consider a  diagram of $L$ with $c$ crossings.   Then we apply Vogel's procedure (\cite{PVogel}) to obtain a closed braid from it along with the following bound for the number of crossings on the resulting braid closure:
\[
c + (s-1)(s-2) \leq c + (c-1)(c-2)
\]
where $s$ stands for the number of Seifert circles \cite{PVogel}.

Assuming our braid closure now has $N$ crossings we will now apply our kinkification procedure in order to obtain a delta diagram from it. In Step $1$, for each pair of crossings we introduce a kink ($\leq N/2$ more crossings) and each of these hover about the braid entanglement twice ($2\times  b \leq 2 N$, where $b$ stands for the number of strands in the braid). Then at the end of Step $1$ we have $\leq (N/2)\times 2N = N^2$ more crossings. Now for the contribution of Step $2$ -- the reader may want to view Figure \ref{fig:10-35-5KINK} again. Arc $2$ goes over the strands of the original braid (left, $b\leq N$ crossings produced). Then it goes over the arcs produced by the introduction of the kinks in Step $1$ (twice, actually, so $\leq 2\times N$ crossings produced). Then it goes over the middle section ($b \leq N$ crossings produced). Then over the arcs produced by the kinks introduced in Step $1$ again (already accounted for). Finally, it adds $3$ more crossings at the bottom of the diagram. The total contribution of Step $2$ for the increase on the number of crossings is then
\[
4N + 3
\]
Hence the kinkification increases the number of crossings, $N$, by adding not more than
\[
N^2 + 4N + 3
\]
crossings to the braid closure it was applied to.
We now compose this result with the preceding estimate of not more than
\[
c + (c-1)(c-2)
\]
crossings from the passage from diagram to closure of braid.
The proof is complete.
\end{proof}

\section{Discharging Procedures}\label{sect:discharging}

\begin{thm}\label{thm:dis}
In a connected lune-free diagram, 
there is at least one triangle which is adjacent  either to another triangle, or to a tetragon, or to a pentagon.
\end{thm}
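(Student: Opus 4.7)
The plan is to use the discharging method on the $4$-regular plane graph underlying the lune-free diagram, arguing by contradiction.

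First I would assign an initial charge $\mu(f) = 4 - d(f)$ to each face $f$, where $d(f)$ denotes the number of sides of $f$. Because every crossing has degree $4$, the handshake relation gives $\sum_f d(f) = 2E = 4V$, and Euler's formula then yields
\[
\sum_f \mu(f) \;=\; 4F - 4V \;=\; 4(F - V) \;=\; 8 \;>\; 0.
\]
The same identity, combined with lune-freeness ($d(f) \geq 3$ for every face), forces $n_3 = 8 + n_5 + 2n_6 + \cdots \geq 8$, so triangles are certainly present and the statement of the theorem is meaningful.

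Next, assuming for contradiction that every triangular face is adjacent only to faces of size at least $6$, I would apply the single discharging rule: each triangle sends charge $1/3$ along each of its three edges to the neighboring face. I then check that every face ends with non-positive charge. Triangles lose $1/3$ three times and end at $0$; tetragons and pentagons have no triangular neighbor by the contradiction hypothesis and retain their initial charges $0$ and $-1$; a $k$-face with $k \geq 6$ has $k$ edges, each contributing at most $1/3$ of incoming charge, and hence ends at most at $(4 - k) + k/3 = (12 - 2k)/3 \leq 0$. Summing, the total charge after discharging is non-positive, but discharging preserves the total, which is $8$. The threshold $k \geq 6$ is essential here: if pentagons were permitted to absorb charge from triangles, a pentagon could gain up to $5/3$ and become strictly positive, wrecking the estimate. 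This is precisely why pentagons must be included among the permitted neighbors in the statement.

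The main obstacle I anticipate is bookkeeping in the presence of degenerate adjacencies -- an edge whose two sides lie on the same face, or a triangle that shares more than one edge with a single larger neighbor. In a connected diagram of a non-split link the underlying $4$-valent graph is bridgeless and every face is a topological disk, and since the charge count above treats each side of each edge independently, these degeneracies do not affect the estimate. A brief verification of these structural facts is all that remains to make the argument airtight.
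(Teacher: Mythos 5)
Your proposal is correct and follows essentially the same route as the paper: the charge $4-d(f)$ summing to $8$ via Euler's formula, the contradiction hypothesis that every triangle borders only faces with at least $6$ sides, the rule sending $1/3$ across each edge of a triangle, and the worst-case bound $(12-2k)/3\le 0$ for $k\ge 6$ are all exactly the paper's argument. Your extra remarks on degenerate adjacencies and on why pentagons must appear in the statement are sensible refinements of the same proof rather than a different approach.
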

\begin{proof}

Let $f_i$ stand for the number of regions with $i$ sides on a given diagram. Then, from Euler's formula (see \cite{EliahouHararyKauffman}),
\[
8 = \sum_{i\geq 2}(4-i)f_i
\]
which amounts to

\[
2f_2 + f_3 + \sum_{i\geq 5}(4-i)f_i= 8
\]
In particular, any lune-free diagram has $f_2=0$. Thus it amounts to
\[
f_3 + \sum_{i\geq 5}(4-i)f_i= 8
\]
Therefore any lune-free diagram has to have triangles since the remaining coefficients are negative or zero. We regard each of the coefficients in this formula as a {\it charge} associated with a region. Each region with $i$ sides is assigned the charge $4-i$. Note that only triangles have positive charge.

We are now in a position to make an argument by {\it discharging} (see \cite{appelhaken}). By discharging we mean taking a positive amount of charge from a region and placing it in an adjacent region. For example suppose we have a triangle with charge $1$ and an adjacent region with charge $c$. Change the charge on the triangle by subtracting $1/3$ and the charge in the adjacent region to $c+1/3$. The total sum of the charges remains the same.

Now suppose that no triangle in the diagram is adjacent to a $3$-gon, a $4$-gon or a $5$-gon. Consider  discharging triangles into regions with $i$ sides, $i\geq 6$, as described above. The worst case is when every edge of the region has a discharge. Then the new charge in the region would be $4-i+i/3 = \frac{12-2i}{3} \leq 0$ for $i\geq 6$. Thus by discharging all the triangles we would have the sum of all the charges  less than or equal to zero. This is a contradiction. And therefore we conclude that there must be at least one triangle which is adjacent to a $4$-gon or a $5$-gon.
\end{proof}

In Figure \ref{fig:6exceptional} we present an example of a planar tetravalent graph and of a link diagram without 2-sided regions and whose triangles do not share edges with tetragons nor pentagons.

\begin{figure}[!ht]
	\psfrag{3}{\huge$3$}
	\psfrag{6}{\huge$6$}
	\centerline{\scalebox{.4}{\includegraphics{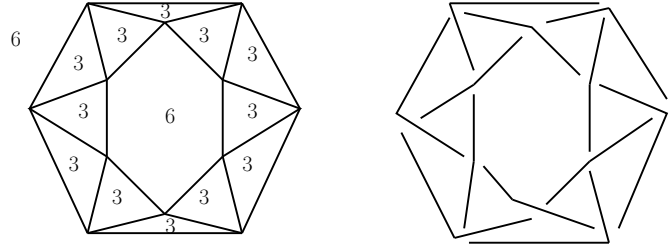}}}
	\caption{An example of a planar tetravalent graph and  of a link diagram without 2-sided regions and whose triangles do not share edges with tetragons nor pentagons.}\label{fig:6exceptional}
\end{figure}

\subsection{Feature invariants from the Discharging Procedure Theorem \ref{thm:dis}}\label{subsect:featureinvs}

In this section and the next we give a number of feature invariants of knots and links. A feature invariant is obtained from a combinatorial property of a diagram by minimizing over the appearance of the number of such properties in all diagrams that represent a given knot or link. For example, every diagram has a certain number of crossings. By minimizing the crossing number we obtain an invariant that is usually called the minimal crossing number of a knot of link. Here we have features such as adjacency of triangles with other triangles and we can minimize or maximize the count of such occurrences. There are many feature invariants suggested by the present work, and we hope that some of them will turn out to be significant for the theory of knots and links.

\begin{def.}
Number of triangles in a lune-free diagram, $D_L$, which share an edge with a triangle, $\Delta_3^{D_L}$, which share an edge with a tetragon, $\Delta_4^{D_L}$, or a pentagon $\Delta_5^{D_L}$,  or either with a triangle or a tetragon, $\Delta_{3, 4}^{D_L}$,  or either with a triangle or a pentagon, $\Delta_{3, 5}^{D_L}$, or either with a tetragon or a pentagon, $\Delta_{4, 5}^{D_L}$, or either with a triangle, or tetragon or a pentagon, $\Delta_{3, 4, 5}^{D_L}$. Their minima over all lune-free diagrams of $L$: $\Delta_3^{L}$, $\Delta_4^{L}$, $\Delta_5^{L}$, $\Delta_{3, 4}^{L}$, $\Delta_{3, 5}^{L}$, $\Delta_{4, 5}^{L}$, and $\Delta_{3, 4, 5}^{L}$.
\end{def.}

\begin{def.}
Number of triangles in the largest cluster of triangles, $C\Delta^{D_L}$ in a given diagram, $D_L$. Its minimum over all lune-free diagrams of $L$: $C\Delta^{L}$.
\end{def.}

\begin{def.}
Number of edges shared by a triangle and a non-triangle, $E\Delta^{D_L}$, over all triangles of diagram $D_L$. Its minimum over all lune-free diagrams of $L$: $E\Delta^{L}$.
\end{def.}

\begin{figure}[!ht]
	\psfrag{3}{\huge$3$}
	\psfrag{4}{\huge$4$}
	\centerline{\scalebox{.37}{\includegraphics{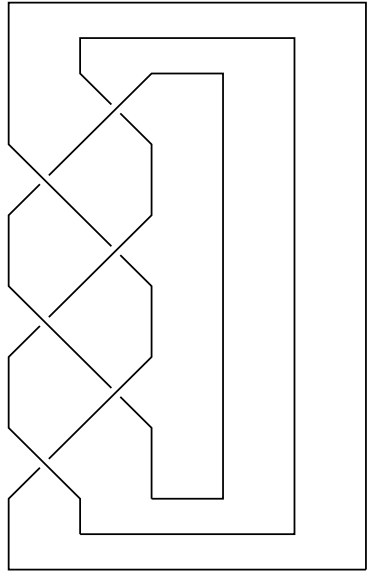}}}
	\caption{The Borromean rings or the Turk's head link THK(3, 3). Each of the regions of this diagram is a triangle. There is thus no adjacency of triangles to squares or pentagons.}\label{fig:trianglethknot}
\end{figure}

\begin{figure}[!ht]
	\psfrag{8-18}{\huge$8_{18}$}
	\psfrag{9-40}{\huge$9_{40}$}
	\psfrag{10-123}{\huge$10_{123}$}
	\psfrag{4}{\huge$4$}
	\psfrag{3}{\huge$3$}
	\psfrag{5}{\huge$5$}
	\centerline{\scalebox{.37}{\includegraphics{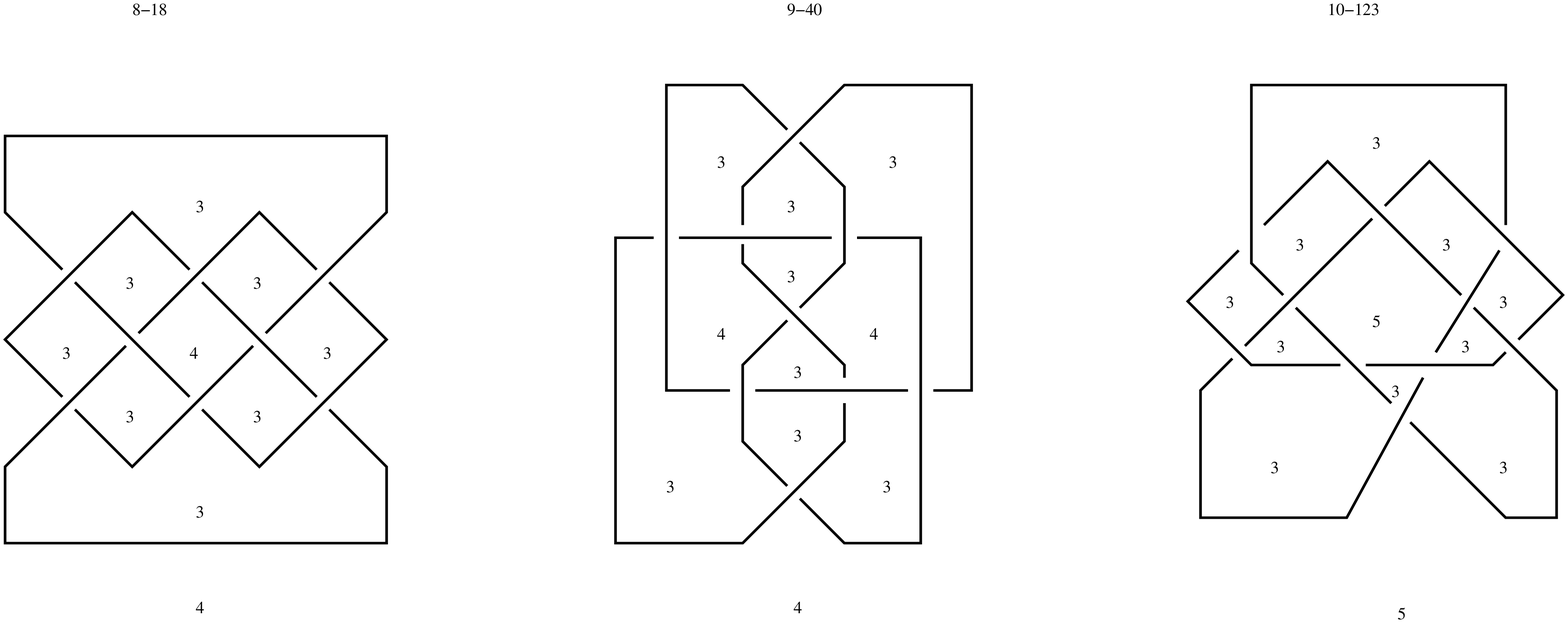}}}
	\caption{The prime knots $8_{18}$, $9_{40}$, and $10_{123}$ whose minimal diagrams, adapted from \cite{Rolfsen}, are delta diagrams. Numbers in regions count the corresponding sides.}\label{fig:8-18-9-40-10-123}
\end{figure}

\begin{figure}[!ht]
    \psfrag{11a-266}{\huge$11a_{266}$}
    \psfrag{12a-0868}{\huge$12a_{868}$}
	\psfrag{8-18}{\huge$8_{18}$}
	\psfrag{9-40}{\huge$9_{40}$}
	\psfrag{10-123}{\huge$10_{123}$}
	\psfrag{4}{\huge$4$}
	\psfrag{3}{\huge$3$}
	\psfrag{5}{\huge$5$}
	\centerline{\scalebox{.37}{\includegraphics{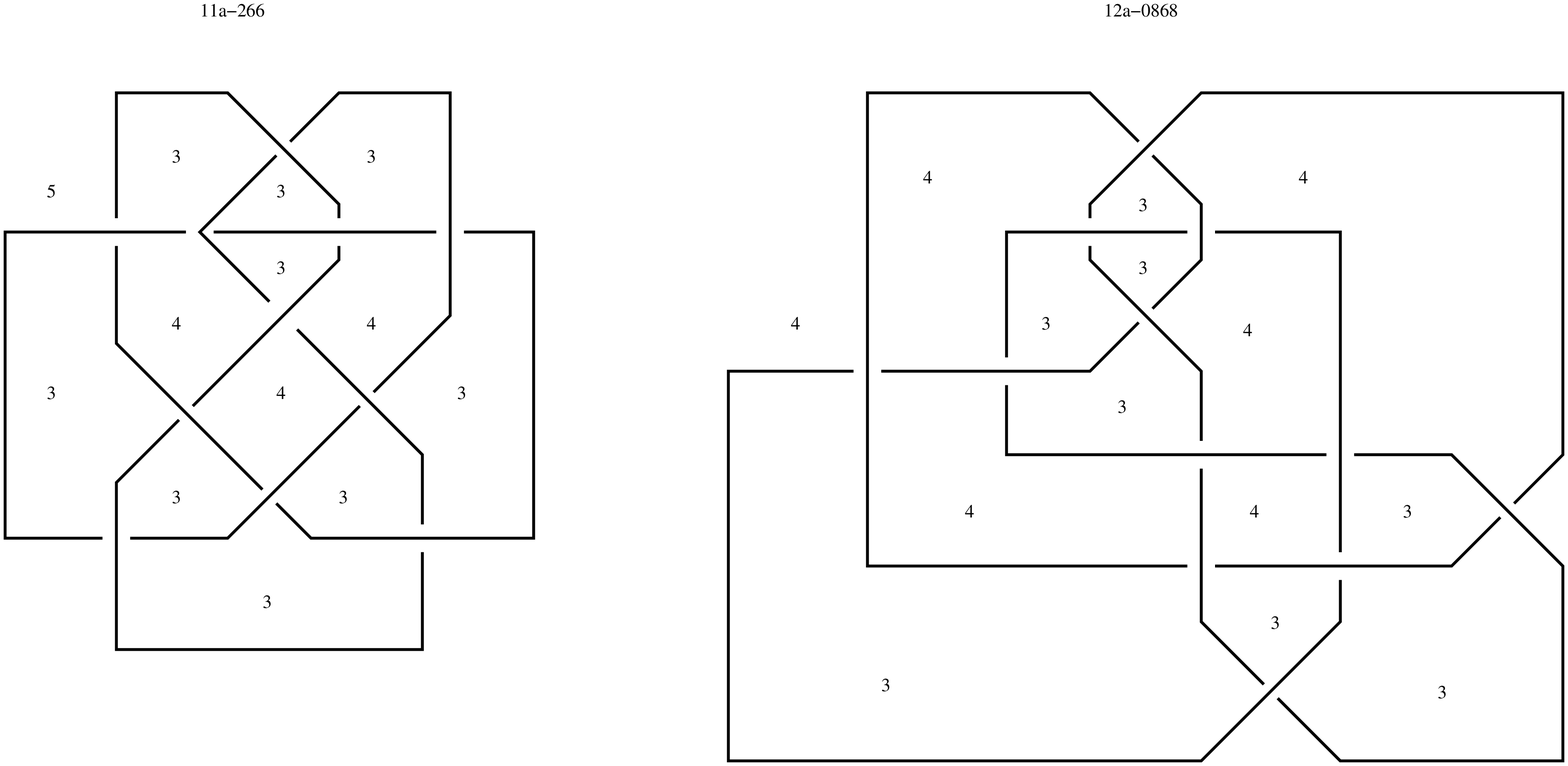}}}
	\caption{The prime knots $11a_{266}$ and $12a_{868}$ whose minimal diagrams, adapted from \cite{knotinfo}, are delta diagrams. Numbers in regions count the corresponding sides.}\label{fig:11a26612a868}
\end{figure}

\begin{figure}[!ht]
    \psfrag{12a-1019}{\huge$12a_{1019}$}
    \psfrag{12a-1188}{\huge$12a_{1188}$}
	\psfrag{8-18}{\huge$8_{18}$}
	\psfrag{9-40}{\huge$9_{40}$}
	\psfrag{10-123}{\huge$10_{123}$}
	\psfrag{4}{\huge$4$}
	\psfrag{3}{\huge$3$}
	\psfrag{5}{\huge$5$}
	\centerline{\scalebox{.37}{\includegraphics{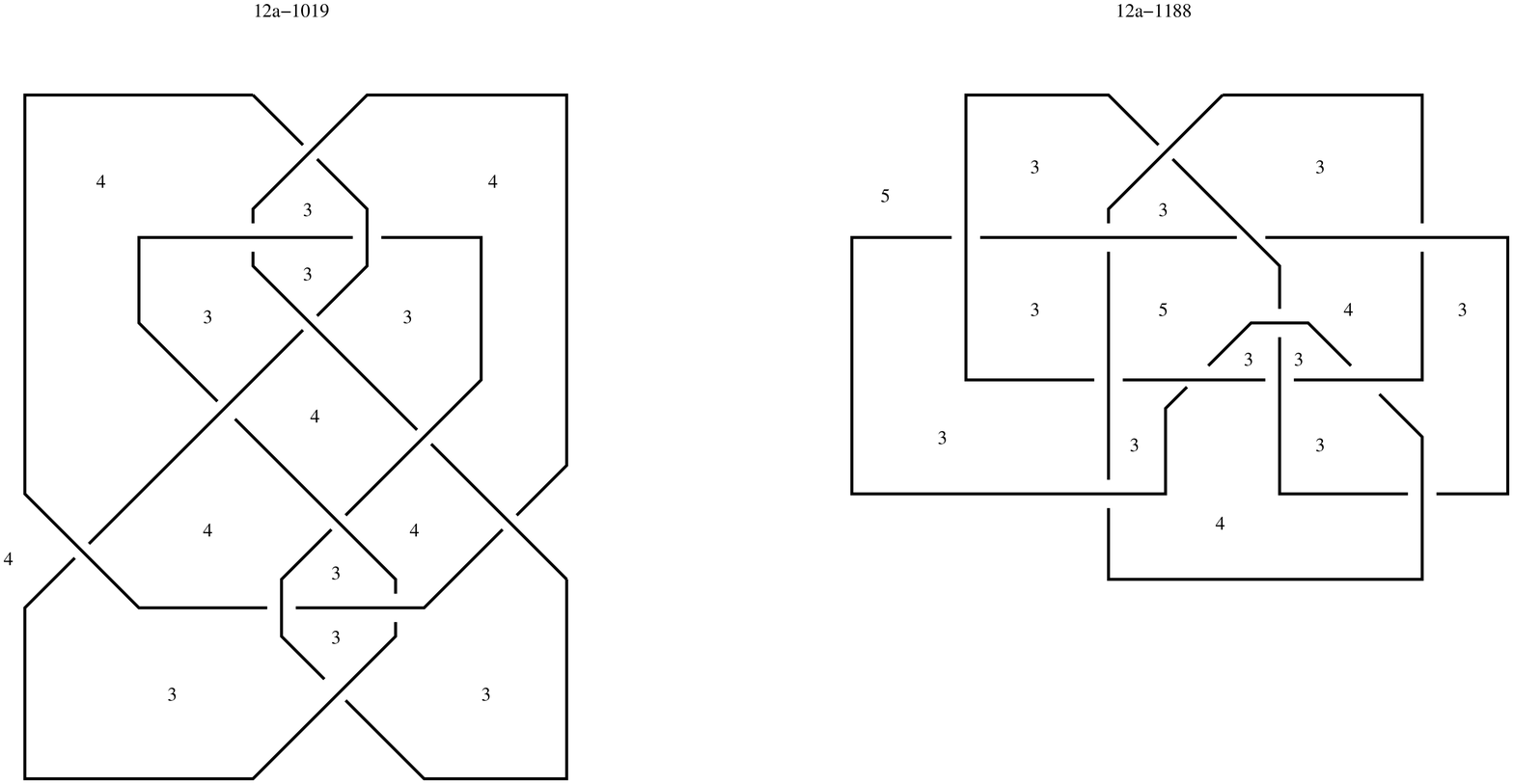}}}
	\caption{The prime knots $12a_{1019}$ and $12a_{1188}$ (prime determinant: $353$) whose minimal diagrams, adapted from \cite{knotinfo}, are delta diagrams. Numbers in regions count the corresponding sides.}\label{fig:12a101912a1188}
\end{figure}

\begin{figure}[!ht]
    \psfrag{12n-837}{\huge$12n_{837}$}
    \psfrag{12n-839}{\huge$12n_{839}$}
	\psfrag{8-18}{\huge$8_{18}$}
	\psfrag{9-40}{\huge$9_{40}$}
	\psfrag{10-123}{\huge$10_{123}$}
	\psfrag{4}{\huge$4$}
	\psfrag{3}{\huge$3$}
	\psfrag{5}{\huge$5$}
	\centerline{\scalebox{.37}{\includegraphics{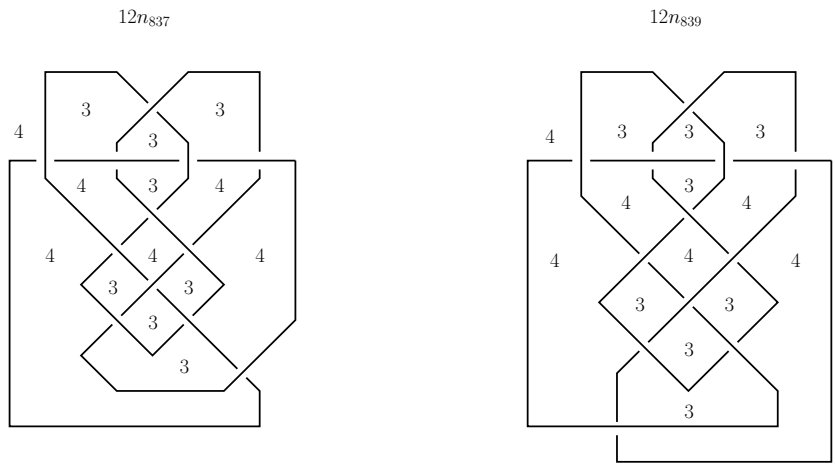}}}
	\caption{The prime knots $12n_{837}$ and $12n_{839}$ whose minimal diagrams, adapted from \cite{knotinfo}, are delta diagrams. Numbers in regions count the corresponding sides.}\label{fig:12n83712n839}
\end{figure}

\section{Feature Invariants}\label{sect:invs}

\begin{def.}
Let $L$ be a link. Let $f_k^{D_L}$ stand for the number of regions with $k$ sides in diagram $D_L$. We let $F_k(L)$ stand for the least upper bound of the ratio between $f_k^{D_L}$ over the total number of regions in diagram $D_L$ with the least upper bound taken over all diagrams of $L$.
\end{def.}

\begin{def.}\label{def:i-adj}
$3$-adjacency of diagram, $adj_3^{D_L}$, is the number of edges shared by $3$-sided regions.
We denote $Radj_3^{D_L}$ the quotient of $adj_3^{D_L}$ over the total number of edges in the diagram. Their minima over all diagrams of $L$ is denoted $adj_3^{L}$ and $Radj_3^{L}$. Analogously for $i$-sided regions.
\end{def.}

\begin{def.}Adjacency matrix of a diagram in the $i, j$ entry, denoted $adj_{(i, j)}^{D_L}$ is the number of edges that are adjacent to both an $i$-region and a $j$-region for that diagram. Its quotient over all edges in the diagram is denoted $Radj_{(i, j)}^{D_L}$. Their minima over all diagrams of $L$ is denoted $adj_{(i, j)}^{L}$ and $Radj_{(i, j)}^{L}$. When $i=j$ using the notation from Definition \ref{def:i-adj}.
\end{def.}

\begin{table}[h!]
\begin{center}
\renewcommand{\arraystretch}{1.25}
\begin{tabular}{| c | c | c | c | c | c | c |  c |  c | c | c  | }\hline
Knot \,&  $THK(3, 3)$ & $8_{18}$ & $9_{40}$ & $10_{123}$ & $11a_{266}$ & $12a_{868}$ & $12a_{1019}$ & $12a_{1188}$ &  $12n_{837}$ & $12n_{839}$     \\ \hline\hline
$\Delta_3^D$ &         $8$  & $8$ & $8$ & $10$ & $9$  & $8$ &  $8$&    $10$  & $8$ & $8$ \\ \hline
$\Delta_4^D$ &         $0$  & $8$ & $6$ & $0$ &  $7$  & $8$ &  $6$&    $6$  &  $6$ & $6$ \\ \hline
$\Delta_5^D$ &         $0$  & $0$ & $0$ & $10$ & $5$ &  $0$  & $0$    &$8$  &  $0$  &$0$ \\ \hline
$\Delta_{3, 4}^D$ &    $8$  & $8$ & $8$ & $10$ & $9$  & $8$ &  $8$&    $10$  & $8$ & $8$ \\ \hline
$\Delta_{3, 5}^D$ &    $8$  & $8$ & $8$ & $10$ & $9$  & $8$ &  $8$&    $10$  & $8$ & $8$ \\ \hline
$\Delta_{4, 5}^D$ &    $0$  & $8$ & $6$ & $10$ & $8$  & $8$ &  $6$&    $10$&   $6$ & $6$ \\ \hline
$\Delta_{3, 4, 5}^D$ & $8$  & $8$ & $8$ & $10$ & $9$  & $8$ &  $8$&    $10$  & $8$ & $8$ \\ \hline
$C\Delta^D$ &          $8$  & $8$ & $4$ & $10$ & $5$  & $4$ &  $4$&    $6$  &  $4$ & $4$ \\ \hline
$E\Delta^D$ &          $0$  & $8$ & $12$ & $10$ & $11$ &$10$&  $12$  & $14$ &  $12$& $12$ \\ \hline
$f_3^D$ &              $8$  & $8$ & $8$ & $10$ & $9$  & $8$ &  $8$&    $10$ &  $8$ & $8$ \\ \hline
$f_4^D$ &              $0$  & $2$ & $3$ & $0$ & $3$   & $6$ &  $6$&    $2$  &  $6$ & $6$ \\ \hline
$f_5^D$ &              $0$  & $0$ & $0$ & $2$ & $1$   & $0$ &  $0$&    $2$  &  $0$ & $0$ \\ \hline

$adj_3^D$ &      $12$ & $8$ & $6$ & $10$& $7$  & $6$ &  $6$&    $8$&  $6$ & $6$ \\ \hline
$adj_4^D$ &      $0$  & $0$ & $0$ & $0$ & $2$  & $6$ &  $6$&    $0$&  $6$ & $6$ \\ \hline
$adj_5^D$ &      $0$  & $0$ & $0$ & $0$ & $0$  & $0$ &  $0$&    $0$&  $0$ & $0$ \\ \hline
$adj_{4, 5}^D$ & $0$  & $0$ & $0$ & $0$ & $0$  & $0$ &  $0$&    $2$&  $0$ & $0$ \\ \hline
\end{tabular}
\caption{Several feature invariants for the minimal diagrams - which are delta diagrams - of the link and the knots in Figures \ref{fig:trianglethknot}, \ref{fig:8-18-9-40-10-123}, \ref{fig:11a26612a868}, \ref{fig:12a101912a1188}, and \ref{fig:12n83712n839}.}
\label{Ta:featureinvs}
\end{center}
\end{table}

\section{Questions}\label{sect:questions}

Many good questions arise in formulating lune-free and delta diagrams. Here are three questions that we would like to see answered.

\begin{enumerate}


\item What is the minimum number of colors of a link restricted to delta diagrams? Is it the same as over all diagrams of this link?

\item If the preceding is not true then is it true that knots admitting minimal colorings modulo one of $2$, $3$, $5$, or $7$, also admit delta diagrams supporting such a minimal coloring?

\item Determine for a given link K all minimal crossing delta diagrams.

\item Can we change the procedure in Step $2$ (or even already in Step $1$) of the kinkification so that the final product is the closure of a braid?

\item Find an upper bound for the number of faces produced by the kinkification in terms of crossing number.
\end{enumerate}

\end{document}